\documentclass[reqno,oneside,11pt]{amsart}

\usepackage{amssymb,array,enumitem,setspace,color,multirow}
\usepackage{newpxtext,newpxmath} %ALR: replacement for kpfont, require slightly more space to be ok
\usepackage[scr=boondoxo,cal=euler,bb=pazo]{mathalpha}
\usepackage{mathtools} % needed for cases*, dcases, bsmallmatrix, etc...
\usepackage{geometry}

\usepackage{tcolorbox} %warning and cute boxed

\usepackage{subcaption}
\usepackage{microtype} 

\definecolor{rouge}{rgb}{0.85,0.1,0.15}
\definecolor{forestgreen}{rgb}{0.13,0.54,0.13}
\definecolor{vertforet}{RGB}{237, 135,45} %Cadium
\definecolor{bleu}{rgb}{0.1,0.2,0.8}
\definecolor{g-darkgreen}{RGB}{7, 141, 112}
\definecolor{g-green}{RGB}{38,206,170}
\definecolor{g-lightgreen}{RGB}{152,232,193}
\definecolor{g-lightblue}{RGB}{127,173,226}
\definecolor{g-indigo}{RGB}{80, 73, 204}
\definecolor{g-blue}{RGB}{61,26,120}
\definecolor{gq-green}{RGB}{74,129,34}
\definecolor{gq-mauve}{RGB}{181,126,220}

\usepackage[pdfpagemode=UseNone, pdfstartview={XYZ null null null}]{hyperref}
\hypersetup{
	colorlinks=true,
	citecolor=g-darkgreen,
	linkcolor=g-indigo,
	urlcolor=g-indigo
}

\usepackage{thmtools}%ALR: To solve autoref problem with bad reference
\usepackage[noabbrev]{cleveref} % now this is a package to be proud of!¸

\usepackage[hyperref,
giveninits, %ALR: watchout can have some issues in arXiv
style=alphabetic,
backend=biber,
]
{biblatex}
\renewbibmacro{in:}{} %To remove the intextsize=tiny... Macro
\AtBeginBibliography{\footnotesize} %smaller font

\numberwithin{equation}{section}

\usepackage{tikz}
\usetikzlibrary{decorations.pathmorphing,decorations.pathreplacing,patterns}
\usepackage{tikz-cd}
\tikzset{>=stealth}
\tikzcdset{ arrow style=tikz}
\usetikzlibrary{tqft}
\newcommand{\bbone}{\mathbb 1}
\DeclareMathOperator{\Hom}{Hom}

\theoremstyle{plain}
\newtheorem{theorem}{Theorem}[section]
\newtheorem{lemma}[theorem]{Lemma}
\newtheorem{proposition}[theorem]{Proposition}

\newtheorem{corollary}[theorem]{Corollary}
\newtheorem{definition}[theorem]{Definition}

\newtheorem{remark}[theorem]{Remark}

\newcommand{\lax}{\mathsf L}
\newcommand{\oplax}{\mathsf{opL}}
\newcommand{\frob}{\mathsf F}

\newcommand{\cate}[1]{\mathsf{#1}} %The categories font
\newcommand{\catC}{\cate{C}} %favourite monoidal category
\newcommand{\catD}{\cate{D}} %the second one
\newcommand{\Obj}{\operatorname{Obj}}
\newcommand{\funG}{\mathcal G} %a functor F
\newcommand{\funK}{\mathcal K} %a functor F

\newcommand{\id}{\mathrm{id}}

\tikzset{cob/.style={scale=.7,baseline={(current bounding box.center)},every node/.style={font=\scriptsize}, every tqft/.append style={transform shape}}}
\tikzset{framecob/.style={very thick,fill = gray!80, fill opacity = .15}}

\newcommand{\on}[1]{\operatorname{#1}}

\usepackage[english]{babel}
\addto\extrasenglish{%
}

\begin{document}
	
	\title{Diagrammatics for lax and Frobenius monoidal functors and weak morphism classifiers}
	
	%Many authors in alphabetical order
	%ALR
	\author[A Langlois-R\'emillard]{Alexis Langlois-R\'emillard}
	\address[Alexis Langlois-R\'emillard]{Hausdorff Center for Mathematics, Endenicher Allee 60, 53115 Bonn, Germany
	}
	\email{langlois@uni-bonn.de;
		\href{https://alexisl-r.github.io/}{https://alexisl-r.github.io/};  \href{https://orcid.org/0000-0002-5919-8766}{ORCID:0000-0002-5919-8766}}
	%MS
	\author[M Stroi{\'n}ski]{Mateusz Stroi{\'n}ski}
	\address[Matti Stroi{\'n}ski]{Fachbereich Mathematik, Universität Hamburg, Bundesstra\ss{}e 55, D-20146 Hamburg, Germany}
	\email{\raggedright mateusz.stroinski@uni-hamburg.de;
		\href{https://sites.google.com/view/stroinski}{https://sites.google.com/view/stroinski}; \linebreak
		\href{https://orcid.org/0000-0002-5792-4541}{ORCID:0000-0002-5792-4541}
	}

	%ALR: remove when submitting?
	\date{\today}
	
	\keywords{Monoidal category; lax monoidal functor; lax Frobenius functor; weak morphism classifiers}
	
	%%%
	%% ABSTRACT
	%%
	\begin{abstract}
		%Given a strict monoidal category, there exist another strict monoidal category, coined the lax envelope, such that its strict monoidal functors are the lax monoidal functors from the original category. Oplax and Frobenius envelopes with the similar properties also can be defined.  We give a diagrammatic cosntruction of lax, oplax and Frobenius envelopes  and we use them to give simple proof of known properties of lax, oplax and Frobenius monoidal functors. Finally, we specialise to specific monoidal category and retrieve interesting.
		%V2:
		The theory of $2$-monads entails that, for a strict monoidal category $\mathsf{C}$,
		there is a strict monoidal category $\lax(\mathsf{C})$ such that strict monoidal functors from $\lax(\mathsf{C})$ are precisely the lax monoidal functors from $\mathsf{C}$. We give an elementary, diagrammatic, construction of $\lax(\mathsf{C})$ and of its variants for oplax and Frobenius lax functors. The diagrams used are analogous to the diagrammatics for lax monoidal functors studied by McCurdy.
		%ALR: normally I'd try to avoid math in the abstract, but I'll be anal about it when it's end time.
	\end{abstract}
	
	\maketitle
	
	%\tableofcontents %ALR: to remove at the end, but I find it useful to navigate during draft phase

	\onehalfspacing %ALR Bigger spacing for easier reading, remove at the end? It works well with pallatino

	\section{Introduction}
	The different notions of weak homomorphisms of monoidal categories, provided by lax, oplax and Frobenius monoidal functors, have in recent years been the subject of increased interest outside of pure category theory, as exemplified by applications in the theory of Hopf monads (see \cite{McCruden02,BLV11}), of tensor categories and their Drinfeld centres~\cite{FLP24proj,FLP24,FLP25,KhL23,JY26}, quantisation and infinitesimal braidings~\cite{PS22}, virtual tangles~\cite{Brochier19}, topological theories generalising TQFTs~\cite{IKO23}, and condensation in modular fusion categories~\cite{Mulevicius24}.
	
	A diagrammatic calculus for such functors has been developed independently on a number of occasions, including \cite{McCurdy12,PS22,Mulevicius24}, building on ideas of \cite{CS99} and \cite{Mellies06}. These accounts vary in the extent of formality and completeness, with \cite{McCurdy12} being particularly extensive. The diagrams for Frobenius monoidal functors naturally extend the familiar diagrammatics for Frobenius algebras, which, in turn, resemble a ``flat'' variant of two-dimensional TQFTs; see \cite{Lau05}.
	
	In this note, we show that, given a (strict) monoidal category $\mathsf{C}$, the diagrams mentioned above can be used to define a new monoidal category $\lax(\mathsf{C})$ (as well as an oplax and a Frobenius lax variant 	thereof), such that lax (resp. oplax, Frobenius lax) monoidal functors out of $\mathsf{C}$ are equivalently the strict monoidal functors out of $\lax(\mathsf{C})$ (resp. $\oplax(\mathsf C)$, $\frob(\mathsf C)$).
	
	This not only immediately verifies the soundness of said diagrammatic calculus, but also provides an explicit, and quite elegant, answer to the question of the existence of a category satisfying the universal property of $\lax(\mathsf{C})$ described above. In the language of category theory, we verify the existence of a {\it lax morphism classifier} (see \cite[Section~2.4]{LackShulman}) for the $2$-monad on $\mathbf{Cat}$ defining monoidal categories.
	Precisely this question was raised by John Baez in~\cite{Baez_nCatCafe_Lax}, and while the affirmative answer to it can be deduced from the general theory of $2$-monads (see \cite[Theorem~3.13]{BKP89}, \cite[Theorem~2.4]{Lack02}), our direct approach is both more elementary and it gives a more complete answer in this particular case, since we do not merely prove the existence of $\lax(\mathsf{C})$, but also describe it explicitly.
	
	%End of history

	%ALR: If including one applications
	%	Furthermore, when specialising $\catC$ to certain specific categories, it is possible to retrieve certain related questions from different fields via the same framework, providing a potential for communication between them. [Write something in scientific paper language to this regard] As an example, we consider $\catC = \mathbf{TL}$, the Temperley--Lieb category in \cref{sec:applications} and relate them to constructions in skein algebras.
	
	%ALR: applications ?
	%\alr{\bf Potential applications? aka giving free creds to cool people}
	%\alr{The framework developed here can also be useful to highlights similarities in other constructions. For example, choosing certain categories and studying them, for example~\cite{KhL23}.}
	
	%ALR this KhL23 paper seems to study a category quite similar to the one we wanted !

	\begin{comment}
	Another goal of this note is to connect various communities who have employed those methods. For example, lax and oplax classifiers appeared in work of Brochier on virtual knots~\cite{Brochier19}. %ALR: and all other re can refer to 

	%ALR: recap of what we do
	The note begins, in \autoref{sec:construction} we introduce the lax, oplax and Frobenius envelopes of a monoidal category, both via generators and relations and via a diagrammatic construction.  We revisit some results using this diagrammatic calculus \autoref{sec:properties}. %We close with applications to specific categories~\cref{sec:applications}.
	\end{comment}

	\section{Construction of classifiers for (Frobenius) lax monoidal functors}\label{sec:construction}
	
	Let $\mathcal{F}: \mathsf{C} \rightarrow \mathsf{D}$ be a functor of strict monoidal categories.
	A {\it lax monoidal structure} on $\mathcal{F}$ consists of a family of morphisms
	$
	\mathscr{m}_{X,Y}: \mathcal{F}(X) \otimes_{\mathsf{D}} \mathcal{F}(Y) \rightarrow \mathcal{F}(X \otimes_{\mathsf{C}} Y) \text{ and } \mathscr{u}: \bbone_{\mathsf{D}} \rightarrow \mathcal{F}(\bbone_{\mathsf{C}})
	$
	natural in $X,Y \in \mathsf{C}$ and such that the following diagrams commute:
	\begin{equation}\label{associativity-lax}
	% https://q.uiver.app/#q=WzAsNCxbMCwwLCJcXG1hdGhjYWx7Rn0oWCkgXFxvdGltZXMgXFxtYXRoY2Fse0Z9KFkpIFxcb3RpbWVzIFxcbWF0aGNhbHtGfShaKSJdLFsxLDAsIlxcbWF0aGNhbHtGfShYIFxcb3RpbWVzIFkpIFxcb3RpbWVzIFxcbWF0aGNhbHtGfShaKSJdLFswLDEsIlxcbWF0aGNhbHtGfShYKSBcXG90aW1lcyBcXG1hdGhjYWx7Rn0oWSBcXG90aW1lcyBaKSJdLFsxLDEsIlxcbWF0aGNhbHtGfShYIFxcb3RpbWVzIFkgXFxvdGltZXMgWikiXSxbMCwxLCJcXG1hdGhzY3J7bX1fe1gsWX1cXG90aW1lcyBcXG1hdGhjYWx7Rn0oWikiXSxbMCwyLCJcXG1hdGhjYWx7Rn0oWClcXG90aW1lcyBcXG1hdGhzY3J7bX1fe1ksWn0iLDJdLFsyLDMsIlxcbWF0aHNjcnttfV97WCxZXFxvdGltZXMgWn0iLDJdLFsxLDMsIlxcbWF0aHNjcnttfV97WCBcXG90aW1lcyBZLFp9Il1d
	\begin{tikzcd}[ampersand replacement=\&, column sep = large]
	{\mathcal{F}(X) \otimes \mathcal{F}(Y) \otimes \mathcal{F}(Z)} \& {\mathcal{F}(X \otimes Y) \otimes \mathcal{F}(Z)} \\
	{\mathcal{F}(X) \otimes \mathcal{F}(Y \otimes Z)} \& {\mathcal{F}(X \otimes Y \otimes Z)}
	\arrow["{\mathscr{m}_{X,Y}\otimes \mathcal{F}(Z)}", from=1-1, to=1-2]
	\arrow["{\mathcal{F}(X)\otimes \mathscr{m}_{Y,Z}}"', from=1-1, to=2-1]
	\arrow["{\mathscr{m}_{X \otimes Y,Z}}", from=1-2, to=2-2]
	\arrow["{\mathscr{m}_{X,Y\otimes Z}}"', from=2-1, to=2-2]
	\end{tikzcd};
	\end{equation}
	\begin{equation}\label{unitality-lax}
	% https://q.uiver.app/#q=WzAsNCxbMCwwLCJcXGJib25lX3tcXG1hdGhzZntEfX0gXFxvdGltZXMgXFxtYXRoY2Fse0Z9KFgpIl0sWzAsMSwiXFxtYXRoY2Fse0Z9KFgpIl0sWzEsMCwiXFxtYXRoY2Fse0Z9KFxcYmJvbmVfe1xcbWF0aHNme0N9fSkgXFxvdGltZXMgXFxtYXRoY2Fse0Z9KFgpIl0sWzEsMSwiXFxtYXRoY2Fse0Z9KFxcYmJvbmVfe1xcbWF0aHNme0N9fSBcXG90aW1lcyBYKSJdLFswLDEsIj0iLDJdLFswLDIsIlxcbWF0aHNjcnt1fSBcXG90aW1lcyBcXG1hdGhjYWx7Rn0oWCkiXSxbMSwzLCI9Il0sWzIsMywiXFxtYXRoc2Nye219X3tcXGJib25lX3tcXG1hdGhzZntDfX0sWH0iXV0=
	\begin{tikzcd}[ampersand replacement=\&, column sep = large]
	{\bbone_{\mathsf{D}} \otimes \mathcal{F}(X)} \& {\mathcal{F}(\bbone_{\mathsf{C}}) \otimes \mathcal{F}(X)} \\
	{\mathcal{F}(X)} \& {\mathcal{F}(\bbone_{\mathsf{C}} \otimes X)}
	\arrow["{\mathscr{u} \otimes \mathcal{F}(X)}", from=1-1, to=1-2]
	\arrow["{=}"', from=1-1, to=2-1]
	\arrow["{\mathscr{m}_{\bbone_{\mathsf{C}},X}}", from=1-2, to=2-2]
	\arrow["{=}", from=2-1, to=2-2]
	\end{tikzcd}
	\text{ and }
	% https://q.uiver.app/#q=WzAsNCxbMCwwLCJcXG1hdGhjYWx7Rn0oWCkgXFxvdGltZXMgXFxiYm9uZV97XFxtYXRoc2Z7RH19Il0sWzAsMSwiXFxtYXRoY2Fse0Z9KFgpIl0sWzEsMCwiXFxtYXRoY2Fse0Z9KFgpIFxcb3RpbWVzIFxcbWF0aGNhbHtGfShcXGJib25lX3tcXG1hdGhzZntDfX0pIl0sWzEsMSwiXFxtYXRoY2Fse0Z9KFggXFxvdGltZXMgXFxiYm9uZV97XFxtYXRoc2Z7Q319KSJdLFswLDEsIj0iLDJdLFswLDIsIlxcbWF0aGNhbHtGfShYKSBcXG90aW1lcyBcXG1hdGhzY3J7dX0iXSxbMSwzLCI9Il0sWzIsMywiXFxtYXRoc2Nye219X3tYLFxcYmJvbmVfe1xcbWF0aHNme0N9fX0iXV0=
\begin{tikzcd}[ampersand replacement=\&,column sep=large]
	{\mathcal{F}(X) \otimes \bbone_{\mathsf{D}}} \& {\mathcal{F}(X) \otimes \mathcal{F}(\bbone_{\mathsf{C}})} \\
	{\mathcal{F}(X)} \& {\mathcal{F}(X \otimes \bbone_{\mathsf{C}})}
	\arrow["{\mathcal{F}(X) \otimes \mathscr{u}}", from=1-1, to=1-2]
	\arrow["{=}"', from=1-1, to=2-1]
	\arrow["{\mathscr{m}_{X,\bbone_{\mathsf{C}}}}", from=1-2, to=2-2]
	\arrow["{=}", from=2-1, to=2-2]
\end{tikzcd}\ .
	\end{equation}
	An {\it oplax monoidal structure} on $\mathcal{F}$ consists of morphisms $\mathscr{c}_{X,Y}: \mathcal{F}(X \otimes_{\mathsf{C}} Y) \rightarrow \mathcal{F}(X) \otimes_{\mathsf{D}} \mathcal{F}(Y)$ and $\mathscr{e}: \mathcal{F}(\bbone_{\mathsf{C}}) \rightarrow \bbone_{\mathsf{D}}$ endowing $\mathcal{F}^{\on{op}}$ with the structure of a lax monoidal functor.
	A {\it Frobenius monoidal structure} on $\mathcal{F}$ consists of a lax monoidal structure $(\mathscr{m}_{X,Y},\mathscr{u})$ on $\mathcal{F}$ and an oplax monoidal structure $(\mathscr{c}_{X,Y}, \mathscr{e})$ on $\mathcal{F}$, such that the following diagrams commute:
	\begin{equation}\label{Frobeniusaxiom}
	% https://q.uiver.app/#q=WzAsNCxbMCwwLCJcXG1hdGhjYWx7Rn0oWCkgXFxvdGltZXMgXFxtYXRoY2Fse0Z9KFkgXFxvdGltZXMgWikiXSxbMSwwLCJcXG1hdGhjYWx7Rn0oWCkgXFxvdGltZXMgXFxtYXRoY2Fse0Z9KFkpIFxcb3RpbWVzIFxcbWF0aGNhbHtGfShaKSJdLFswLDEsIlxcbWF0aGNhbHtGfShYIFxcb3RpbWVzIFkgXFxvdGltZXMgWikiXSxbMSwxLCJcXG1hdGhjYWx7Rn0oWCBcXG90aW1lcyBZKSBcXG90aW1lcyBcXG1hdGhjYWx7Rn0oWikiXSxbMCwxLCJcXG1hdGhjYWx7Rn0oWCkgXFxvdGltZXMgXFxtYXRoc2Nye2N9X3tZLFp9Il0sWzEsMywiXFxtYXRoc2Nye219X3tYLFl9IFxcb3RpbWVzIFxcbWF0aGNhbHtGfShaKSJdLFswLDIsIlxcbWF0aHNjcnttfV97WCxZXFxvdGltZXMgWn0iLDJdLFsyLDMsIlxcbWF0aHNjcntjfV97WFxcb3RpbWVzIFksIFp9IiwyXV0=
	\begin{tikzcd}[ampersand replacement=\&]
	{\mathcal{F}(X) \otimes \mathcal{F}(Y \otimes Z)} \& {\mathcal{F}(X) \otimes \mathcal{F}(Y) \otimes \mathcal{F}(Z)} \\
	{\mathcal{F}(X \otimes Y \otimes Z)} \& {\mathcal{F}(X \otimes Y) \otimes \mathcal{F}(Z)}
	\arrow["{\mathcal{F}(X) \otimes \mathscr{c}_{Y,Z}}", from=1-1, to=1-2]
	\arrow["{\mathscr{m}_{X,Y\otimes Z}}"', from=1-1, to=2-1]
	\arrow["{\mathscr{m}_{X,Y} \otimes \mathcal{F}(Z)}", from=1-2, to=2-2]
	\arrow["{\mathscr{c}_{X\otimes Y, Z}}"', from=2-1, to=2-2]
	\end{tikzcd}
	\quad
	%\text{ and }
	% https://q.uiver.app/#q=WzAsNCxbMCwwLCJcXG1hdGhjYWx7Rn0oWCBcXG90aW1lcyBZKSBcXG90aW1lcyBcXG1hdGhjYWx7Rn0oWikiXSxbMCwxLCJcXG1hdGhjYWx7Rn0oWFxcb3RpbWVzIFkgXFxvdGltZXMgWikiXSxbMSwwLCJcXG1hdGhjYWx7Rn0oWCkgXFxvdGltZXMgXFxtYXRoY2Fse0Z9KFkpIFxcb3RpbWVzIFxcbWF0aGNhbHtGfShaKSJdLFsxLDEsIlxcbWF0aGNhbHtGfShYKSBcXG90aW1lcyBcXG1hdGhjYWx7Rn0oWSBcXG90aW1lcyBaKSJdLFswLDIsIlxcbWF0aHNjcntjfV97WCxZfVxcb3RpbWVzIFxcbWF0aGNhbHtGfShaKSJdLFsyLDMsIlxcbWF0aGNhbHtGfShYKVxcb3RpbWVzIFxcbWF0aHNjcnttfV97WSxafSJdLFswLDEsIlxcbWF0aHNjcnttfV97WFxcb3RpbWVzIFksWn0iLDJdLFsxLDMsIlxcbWF0aHNjcntjfV97WCxZXFxvdGltZXMgWn0iLDJdXQ==
	\begin{tikzcd}[ampersand replacement=\&]
	{\mathcal{F}(X \otimes Y) \otimes \mathcal{F}(Z)} \& {\mathcal{F}(X) \otimes \mathcal{F}(Y) \otimes \mathcal{F}(Z)} \\
	{\mathcal{F}(X\otimes Y \otimes Z)} \& {\mathcal{F}(X) \otimes \mathcal{F}(Y \otimes Z)}
	\arrow["{\mathscr{c}_{X,Y}\otimes \mathcal{F}(Z)}", from=1-1, to=1-2]
	\arrow["{\mathscr{m}_{X\otimes Y,Z}}"', from=1-1, to=2-1]
	\arrow["{\mathcal{F}(X)\otimes \mathscr{m}_{Y,Z}}", from=1-2, to=2-2]
	\arrow["{\mathscr{c}_{X,Y\otimes Z}}"', from=2-1, to=2-2]
	\end{tikzcd}.
	\end{equation}
	\begin{definition}\label{def:laxcatL}
		Let $\catC$ be a strict monoidal category. The strict monoidal category $\lax(\catC)$ is defined as follows:
		\begin{itemize}
			\item[\textbf{Objects}]
			We let  $\Obj(\lax(\catC))$ be the free monoid on
			$\{\underline x\mid x\in \Obj \catC\}$. It consists of finite lists of symbols of the form $\underline{x}$, for $x \in \Obj \catC$, and the monoidal unit is the empty list, $\emptyset$.
			\item[\textbf{Morphisms}] For $x,y,z \in \Obj \catC$, we add generating morphisms $\{ \underline f: \underline x\to  \underline y \mid f\in \Hom(x,y)\}$ and a further generator $\ell_{x,z}: \underline x \otimes \underline{z} \to \underline{x\otimes z}$. Additionally, a generating morphism $\mathscr j:\emptyset \to \underline{\bbone}$. These generators are subject to the following relations:
			\begin{enumerate}[start=0]
				\item $\underline{\on{id}_{x}} = \on{id}_{\underline{x}}$\label{item:LaxR0}\, ;
				\item $\underline g\circ \underline f = \underline{g\circ f}$; \label{item:LaxR1}
				\item $\ell_{y,y'} \circ \underline f\otimes \underline f' = \underline{f\otimes f'}\circ \ell_{x,x'}$ for $\underline f : \underline x \to \underline y$ and $\underline f': \underline x' \to \underline y'$; \label{item:LaxR2}
				\item $\ell_{x\otimes z,w}\circ (\ell_{x,z}\otimes \id_{\underline w}) = \ell_{x,z\otimes w} \circ (\id_{\underline x} \otimes \ell_{z,w})$; \label{item:LaxR3}
				\item $\ell_{x,\bbone} \circ (\id_{\underline x}\otimes \mathscr j) = \id_{\underline{x}}$\, ; \label{item:LaxR4a}
				\item $\ell_{\bbone, x} \circ (\mathscr j\otimes \id_{\underline{x}}) = \id_{\underline{x}}$\, . \label{item:LaxR4b}
			\end{enumerate}
		\end{itemize}
	\end{definition}
	Let $\mathsf{C,D}$ be strict monoidal categories. We denote by $\mathbf{Lax}(\mathsf{C,D})$ the category of lax monoidal functors from $\mathsf{C}$ to $\mathsf{D}$, and by $\mathbf{Strict}(\mathsf{C,D})$ the category of strict monoidal functors. In both cases, the morphisms are monoidal transformations, i.e. transformations $\sigma: \mathcal{F} \Rightarrow \mathcal{G}$ satisfying $\sigma_{X \otimes Y} \circ \mathscr{m}_{X,Y}^{\mathcal{F}} = \mathscr{m}_{X,Y}^{\mathcal G} \circ (\sigma_{X} \otimes \sigma_{Y})$.
	Define $\mathbf{Oplax}(\mathsf{C,D})$ similarly in terms of oplax monoidal functors, and $\mathbf{Frob}(\mathsf{C,D})$ as the category of Frobenius monoidal functors. In this last case, we take as morphisms the transformations that are simultaneously morphisms of lax and oplax monoidal functors.
	\begin{proposition}\label{prop:presentation}
		For strict monoidal categories $\mathsf C, \mathsf D$, 
		there is an isomorphism of categories
		$\mathbf{Lax}(\mathsf{C,D}) \cong \mathbf{Strict}(\lax(\mathsf{C}), \mathsf{D})$.
	\end{proposition}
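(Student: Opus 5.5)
We construct mutually inverse functors $\Phi\colon \mathbf{Strict}(\lax(\catC),\catD)\to \mathbf{Lax}(\catC,\catD)$ and $\Psi\colon \mathbf{Lax}(\catC,\catD)\to\mathbf{Strict}(\lax(\catC),\catD)$, using that $\lax(\catC)$ is presented as a strict monoidal category by generators and relations. Thus a strict monoidal functor out of $\lax(\catC)$ is the same as an assignment of objects of $\catD$ to the generating objects $\underline x$, together with morphisms of $\catD$ for the generating morphisms, satisfying the images of relations \eqref{item:LaxR0}--\eqref{item:LaxR4b}. We will state this universal property of the free strict monoidal category on a monoidal signature, modulo relations, as a standard fact. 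Concretely, morphisms of $\lax(\catC)$ are equivalence classes of string diagrams built from the generators, and a strict monoidal functor is determined by its values on generators and is well defined exactly when the relations hold.

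Given a strict monoidal $\mathcal G\colon\lax(\catC)\to\catD$, set $\mathcal F(x)=\mathcal G(\underline x)$, $\mathcal F(f)=\mathcal G(\underline f)$, $\mathscr m_{x,z}=\mathcal G(\ell_{x,z})$ and $\mathscr u=\mathcal G(\mathscr j)$. The relations translate directly into the required properties:
\begin{itemize}
\item relations \eqref{item:LaxR0} and \eqref{item:LaxR1} say that $\mathcal F$ is a functor;
\item relation \eqref{item:LaxR2} is the naturality of $\mathscr m$ in both variables (its image under $\mathcal G$ is the naturality square, using strictness $\mathcal G(\underline f\otimes\underline f')=\mathcal F(f)\otimes\mathcal F(f')$);
\item relation \eqref{item:LaxR3} is \eqref{associativity-lax};
\item relations \eqref{item:LaxR4a} and \eqref{item:LaxR4b} are the two squares of \eqref{unitality-lax}, using $\mathcal G(\emptyset)=\bbone_\catD$.
\end{itemize}
Conversely, given a lax functor $(\mathcal F,\mathscr m,\mathscr u)$, define $\Psi(\mathcal F)$ on objects by $\underline{x_1}\otimes\cdots\otimes\underline{x_n}\mapsto \mathcal F(x_1)\otimes\cdots\otimes\mathcal F(x_n)$, with $\emptyset\mapsto\bbone_\catD$; this is forced by strictness and freeness of the object monoid. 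On generating morphisms, set $\underline f\mapsto\mathcal F(f)$, $\ell\mapsto\mathscr m$ and $\mathscr j\mapsto\mathscr u$. The same dictionary shows that the relations hold, so $\Psi(\mathcal F)$ is well defined. It is clear that $\Phi\Psi=\id$, and $\Psi\Phi=\id$ because a strict monoidal functor is determined by its values on generators.

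For morphisms, a monoidal natural transformation $\sigma\colon\mathcal G\Rightarrow\mathcal G'$ between strict functors is determined by its components $\sigma_{\underline x}$, since monoidality forces $\sigma_{\underline{x_1}\otimes\cdots\otimes\underline{x_n}}=\sigma_{\underline{x_1}}\otimes\cdots\otimes\sigma_{\underline{x_n}}$ and $\sigma_\emptyset=\id$. Naturality of such a tensor-extended family needs only to be checked on generators, because naturality squares paste under composition and tensor product. On $\underline f$ the check is naturality of $\Phi\sigma$. On $\ell_{x,z}$ it is the monoidality condition $\sigma_{x\otimes z}\circ\mathscr m=\mathscr m'\circ(\sigma_x\otimes\sigma_z)$. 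On $\mathscr j$ it is $\sigma_{\bbone}\circ\mathscr u=\mathscr u'$. This last condition is not listed in the paper's definition of monoidal transformation, but it follows: compose \eqref{unitality-lax} for $X=\bbone$ with $\mathscr u$ and use monoidality together with the fact that $\mathscr u$ is split by $\mathscr m$. If that argument fails, we take the unit condition as part of the definition. Hence the assignments on hom-sets are bijections, compatible with composition and identities, and we obtain an isomorphism of categories.

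The main obstacle is the well-definedness step: justifying that defining $\Psi(\mathcal F)$ on generators and checking the relations suffices. This requires a precise description of morphisms in the monoidal category presented by generators and relations, i.e.\ the free strict monoidal category on the signature, together with its universal property, quotiented by the congruence generated by the relations. We will handle this by invoking the standard construction via string diagrams up to planar isotopy, and note that interchange holds automatically in $\catD$. The remaining checks are diagram translations.
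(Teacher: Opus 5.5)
Your proposal is essentially the paper's proof. The paper also takes the universal property of the presentation of $\lax(\catC)$ as given (``by definition of $\lax(\catC)$''). It then matches the relations of \Cref{def:laxcatL} with functoriality, naturality, associativity and the two unit axioms. Finally, it identifies monoidal transformations with families $\sigma_{\underline x}$ whose naturality at $\ell_{x,z}$ and $\mathscr j$ is exactly monoidality of $\overline\sigma$.

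One correction: your derivation of $\sigma_{\bbone}\circ\mathscr u=\mathscr u'$ from the multiplicative condition does not work, and the claim is false in general. Precomposing the unit axiom at $X=\bbone$ with $\mathscr u$ gives $\mathscr m_{\bbone,\bbone}\circ(\mathscr u\otimes\mathscr u)=\mathscr u$. Multiplicativity then only yields $\mathscr m'_{\bbone,\bbone}\circ(e\otimes e)=e$ for $e=\sigma_{\bbone}\circ\mathscr u$, i.e.\ $e$ is an idempotent, not necessarily the unit. For a counterexample, take $\catC$ the trivial monoidal category, so that lax functors are monoids in $\catD$. In a strictification of $(\mathbf{Ab},\otimes)$, the map $\mathbb Z\to\mathbb Z\times\mathbb Z$, $n\mapsto(n,0)$, is multiplicative but not unital. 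It has no counterpart on the strict side, since naturality at $\mathscr j$ fails.

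So your fallback is not optional. The unit axiom must be part of the definition of morphisms in $\mathbf{Lax}(\catC,\catD)$, and likewise in $\mathbf{Strict}(\lax(\catC),\catD)$, where it is what gives $\sigma_\emptyset=\id$. The paper implicitly does the same: its proof uses naturality at $\mathscr j$, which is exactly the unit condition, even though its displayed definition of monoidal transformation lists only the multiplicative one.
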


\begin{proof}
		By definition of $\lax(\mathsf{C})$, a strict monoidal functor $\mathcal{F}:\lax(\mathsf{C})\rightarrow \mathsf{D}$ is determined by a morphism of monoids $\mathcal{F}_{0}: \Obj (\lax(\mathsf{C})) \rightarrow \Obj(\mathsf{D})$, equivalently a function $\overline{\mathcal{F}_{0}}: \Obj(\mathsf{C}) \rightarrow \Obj (\mathsf{D})$, and by assignments of morphisms $\mathcal{F}(\underline{g}): \mathcal{F}(\underline{x}) \rightarrow \mathcal{F}(\underline{y})$ for any $x,y \in \Obj \mathsf{C}$ and any $g \in \on{Hom}_{\mathsf{C}}(x,y)$, as well as $\mathcal{F}(\ell_{x,z}): \mathcal{F}(\underline{x} \otimes \underline{z}) = \mathcal{F}(\underline{x}) \otimes \mathcal{F}(\underline{z}) \rightarrow \mathcal{F}(\underline{x \otimes z})$  	and $\mathcal{F}(\mathscr j): \mathcal{F}(\emptyset) = \bbone_{\mathsf{D}} \rightarrow \mathcal{F}(\bbone_{\mathsf{C}})$.
		
		Prior to comparing axioms, we now observe that the above data coincides with the data required to specify a lax monoidal functor $\overline{\mathcal{F}}: \mathsf{C} \rightarrow \mathsf{D}$, defined by $\overline{\mathcal{F}}_{0} := \overline{\mathcal{F}_{0}}$ and by $\overline{\mathcal{F}}(g) := \mathcal{F}(\underline{g})$ for any morphism $g$ of $\mathsf{C}$, with candidate lax monoidal structure afforded by $\mathscr{m}^{\overline{\mathcal{F}}}_{x,z} := \mathcal{F}(\ell_{x,z})$ and $\mathscr{u}^{\overline{\mathcal{F}}} := \mathcal{F}(\mathscr{j})$. \Cref{item:LaxR0,item:LaxR1} are equivalent to the functoriality of $\overline{\mathcal{F}}$; \cref{item:LaxR2} is equivalent to the naturality of the candidate lax monoidal structure; \cref{item:LaxR3} is equivalent to axiom~\eqref{associativity-lax} for lax monoidal functors, and \cref{item:LaxR4a,item:LaxR4b} are equivalent to the respective unitality axioms \eqref{unitality-lax} for such functors.	The bijection between objects of $\mathbf{Lax}(\mathsf{C,D})$ and $\mathbf{Strict}(\lax(\mathsf{C}), \mathsf{D})$ follows.
		
		Similarly, a monoidal transformation $\sigma: \mathcal{F} \Rightarrow \mathcal{G}$ in $\mathbf{Strict}(\lax(\mathsf{C}), \mathsf{D})$ satisfies $\sigma_{\underline{x}\otimes \underline{z}} = \sigma_{\underline{x}} \otimes \sigma_{\underline{z}}$; hence, it is determined by the components of the form $\sigma_{\underline{x}}$, using which we define a transformation $\overline{\sigma}: \overline{\mathcal{F}} \Rightarrow \overline{\mathcal{G}}$, by setting $\overline{\sigma}_{x} = \sigma_{\underline{x}}$. Monoidality of $\overline{\sigma}$ is equivalent to the naturality square for $\sigma$ commuting for the morphisms $\ell_{x,z}$ and $\mathscr j$, establishing bijections on $\on{Hom}$-sets. It is easy to verify the functoriality of these assignments.
	\end{proof}
	
	\begin{remark}
		Using the language of \cite{IKO23}, we conclude that the topological theories on $\mathsf{C}$ are precisely the TQFTs on $\lax(\mathsf{C})$.
	\end{remark}
	
	\begin{proposition}
		Let $\mathsf{C}$ be a strict monoidal category. Then $\lax(\mathsf{C}^{\on{op}})^{\on{op}}$ is an oplax morphism classifier for $\mathsf{C}$. In other words, for any strict monoidal category $\mathsf{D}$ we find an isomorphism of categories
		$\mathbf{Oplax}(\mathsf{C,D}) \cong \mathbf{Strict}(\lax(\mathsf{C}^{\on{op}})^{\on{op}}, \mathsf{D})$.
	\end{proposition}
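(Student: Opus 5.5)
The plan is to reduce the statement to \cref{prop:presentation} by passing to opposite categories, using two isomorphisms of categories. The first is $(-)^{\on{op}}$ on strict monoidal functors. A strict monoidal functor $\mathcal{F}: \lax(\mathsf{C}^{\on{op}})^{\on{op}} \to \mathsf{D}$ is the same thing as a strict monoidal functor $\mathcal{F}^{\on{op}}: \lax(\mathsf{C}^{\on{op}}) \to \mathsf{D}^{\on{op}}$. Here the opposite of a strict monoidal category is again strict monoidal, with the same tensor product on objects and morphisms. A monoidal transformation $\sigma: \mathcal{F} \Rightarrow \mathcal{G}$ corresponds to the monoidal transformation $\sigma^{\on{op}}: \mathcal{G}^{\on{op}} \Rightarrow \mathcal{F}^{\on{op}}$, with the same components. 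This gives an isomorphism
\[
\mathbf{Strict}(\lax(\mathsf{C}^{\on{op}})^{\on{op}}, \mathsf{D}) \cong \mathbf{Strict}(\lax(\mathsf{C}^{\on{op}}), \mathsf{D}^{\on{op}})^{\on{op}}.
\]
Applying \cref{prop:presentation} to the pair $\mathsf{C}^{\on{op}}, \mathsf{D}^{\on{op}}$ then identifies the right-hand side with $\mathbf{Lax}(\mathsf{C}^{\on{op}}, \mathsf{D}^{\on{op}})^{\on{op}}$.

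The second isomorphism is $\mathbf{Lax}(\mathsf{C}^{\on{op}}, \mathsf{D}^{\on{op}})^{\on{op}} \cong \mathbf{Oplax}(\mathsf{C},\mathsf{D})$, and this essentially unpacks the definition of an oplax structure given above. A lax structure on a functor $\mathcal{F}^{\on{op}}: \mathsf{C}^{\on{op}} \to \mathsf{D}^{\on{op}}$ consists of the following data:
\begin{itemize}
\item morphisms $\mathcal{F}(X)\otimes \mathcal{F}(Y) \to \mathcal{F}(X\otimes Y)$ in $\mathsf{D}^{\on{op}}$, that is, $\mathscr{c}_{X,Y}: \mathcal{F}(X\otimes Y) \to \mathcal{F}(X)\otimes\mathcal{F}(Y)$ in $\mathsf{D}$;
\item a morphism $\bbone \to \mathcal{F}(\bbone)$ in $\mathsf{D}^{\on{op}}$, that is, $\mathscr{e}: \mathcal{F}(\bbone)\to\bbone$ in $\mathsf{D}$.
\end{itemize}
Naturality in $\mathsf{C}^{\on{op}}$ is the same as naturality in $\mathsf{C}$. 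The diagrams \eqref{associativity-lax} and \eqref{unitality-lax} in $\mathsf{D}^{\on{op}}$ are exactly the reversed diagrams, which define coassociativity and counitality. So objects correspond bijectively.

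For morphisms, take a monoidal transformation $\tau: \mathcal{F}^{\on{op}} \Rightarrow \mathcal{G}^{\on{op}}$ of lax functors. Its components $\tau_X: \mathcal{G}(X) \to \mathcal{F}(X)$ in $\mathsf{D}$ form a transformation $\mathcal{G} \Rightarrow \mathcal{F}$. The monoidality condition $\tau_{X\otimes Y}\circ \mathscr{m}^{\mathcal{F}^{\on{op}}} = \mathscr{m}^{\mathcal{G}^{\on{op}}}\circ(\tau_X\otimes\tau_Y)$, read in $\mathsf{D}$, becomes $\mathscr{c}^{\mathcal{F}}\circ\tau_{X\otimes Y} = (\tau_X\otimes\tau_Y)\circ \mathscr{c}^{\mathcal{G}}$. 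This is precisely the condition for $\tau$ to be a morphism $\mathcal{G}\Rightarrow\mathcal{F}$ of oplax functors. The extra $(-)^{\on{op}}$ on the category $\mathbf{Lax}(\mathsf{C}^{\on{op}}, \mathsf{D}^{\on{op}})$ accounts for this reversal of direction, and functoriality of all these identifications is immediate.

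Composing the two isomorphisms gives the claim. I do not expect a substantive obstacle. The only delicate point is bookkeeping of variance: one must check that the two reversals of natural transformations, one from $\mathcal{F}\mapsto\mathcal{F}^{\on{op}}$ and one built into the oplax convention, cancel, so that no stray $(-)^{\on{op}}$ survives on the hom-category. It is also worth confirming once that reversing the monoidality square for strict functors is consistent, which holds because the tensor product on $\mathsf{D}^{\on{op}}$ acts on morphisms exactly as in $\mathsf{D}$.
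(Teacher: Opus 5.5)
Your proposal is correct and follows the paper's proof, which is the chain $\mathbf{Oplax}(\mathsf{C,D}) \cong \mathbf{Lax}(\mathsf{C}^{\on{op}},\mathsf{D}^{\on{op}})^{\on{op}} \cong \mathbf{Strict}(\lax(\mathsf{C}^{\on{op}}),\mathsf{D}^{\on{op}})^{\on{op}} \cong \mathbf{Strict}(\lax(\mathsf{C}^{\on{op}})^{\on{op}},\mathsf{D})$. You spell out each link of that chain, and your variance bookkeeping is right, including the condition $\mathscr{c}^{\mathcal{F}}\circ\tau_{X\otimes Y} = (\tau_X\otimes\tau_Y)\circ\mathscr{c}^{\mathcal{G}}$ for $\tau:\mathcal{G}\Rightarrow\mathcal{F}$.
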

	\begin{proof}
		$
		\mathbf{Oplax}(\mathsf{C,D}) \simeq \mathbf{Lax}(\mathsf{C}^{\on{op}},\mathsf{D}^{\on{op}})^{\on{op}} \simeq \mathbf{Strict}(\lax(\mathsf{C}^{\on{op}}),\mathsf{D}^{\on{op}})^{\on{op}} \simeq \mathbf{Strict}(\lax(\mathsf{C}^{\on{op}})^{\on{op}},\mathsf{D}).
		$
	\end{proof}
	
	\begin{corollary}\label{oplaxcopants}
		The oplax classifier $\oplax(\mathsf{C})$ can be presented analogously to $\lax(\mathsf{C})$, involving generators $\mathscr{k}_{x,z}: \underline{x\otimes z} \rightarrow \underline{x} \otimes \underline{z}$ rather than $\ell_{x,z}$ and $\mathscr{q}: \underline{\bbone} \rightarrow \emptyset$ rather than $\mathscr j$.
	\end{corollary}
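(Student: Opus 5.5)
The plan is to unwind the definition of $\lax(\mathsf{C}^{\on{op}})^{\on{op}}$ and translate its presentation generator by generator and relation by relation. By \cref{def:laxcatL} applied to $\mathsf{C}^{\on{op}}$ (a strict monoidal category with the same objects and tensor product), the objects of $\lax(\mathsf{C}^{\on{op}})$ form the free monoid on symbols $\underline{x}$, $x \in \Obj\catC$. Passing to the opposite category changes neither the objects nor the monoidal product, so this is also the object monoid of $\oplax(\mathsf{C})$.

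The generating morphisms of $\lax(\mathsf{C}^{\on{op}})$ are $\underline{f^{\on{op}}}: \underline{y} \to \underline{x}$ for $f \in \Hom_{\catC}(x,y)$, together with $\ell_{x,z}: \underline{x}\otimes\underline{z} \to \underline{x\otimes z}$ and $\mathscr j: \emptyset \to \underline{\bbone}$. In the opposite category these become:
\begin{itemize}
\item morphisms $\underline{f}: \underline{x} \to \underline{y}$;
\item morphisms $\mathscr{k}_{x,z} := \ell_{x,z}^{\on{op}}: \underline{x\otimes z} \to \underline{x}\otimes\underline{z}$;
\item a morphism $\mathscr{q} := \mathscr{j}^{\on{op}}: \underline{\bbone} \to \emptyset$.
\end{itemize}
Here I use the general fact that if a strict monoidal category is presented by generators and relations, then its opposite is presented by the reversed generators. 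The relations are obtained by reversing composition order, and the tensor product is unchanged, since $(g\otimes h)^{\on{op}} = g^{\on{op}}\otimes h^{\on{op}}$ in $\mathsf{D}^{\on{op}}$.

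I will then rewrite each relation of \cref{def:laxcatL} in this dual form.
\begin{itemize}
\item \Cref{item:LaxR0,item:LaxR1} become $\underline{\id_x} = \id_{\underline{x}}$ and $\underline{g}\circ\underline{f} = \underline{g\circ f}$. The order reversal of $\mathsf{C}^{\on{op}}$ and that of the outer ${\on{op}}$ cancel, so these hold with composition in $\catC$.
\item \Cref{item:LaxR2} becomes $(\underline{f}\otimes\underline{f'})\circ \mathscr{k}_{x,x'} = \mathscr{k}_{y,y'}\circ \underline{f\otimes f'}$.
\item \Cref{item:LaxR3} becomes coassociativity, $(\mathscr{k}_{x,z}\otimes \id_{\underline w})\circ \mathscr{k}_{x\otimes z,w} = (\id_{\underline x}\otimes \mathscr{k}_{z,w})\circ \mathscr{k}_{x,z\otimes w}$.
\item \Cref{item:LaxR4a,item:LaxR4b} become the counit relations $(\id_{\underline x}\otimes\mathscr q)\circ\mathscr{k}_{x,\bbone} = \id_{\underline x}$ and $(\mathscr q\otimes\id_{\underline x})\circ\mathscr{k}_{\bbone,x} = \id_{\underline x}$.
\end{itemize}
These relations correspond exactly to the naturality, coassociativity and counitality axioms of an oplax structure. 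This gives the analogous presentation, and the preceding proposition then shows that $\oplax(\mathsf{C}) := \lax(\mathsf{C}^{\on{op}})^{\on{op}}$ has the oplax classifying property.

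The only point requiring care is the justification that taking opposites commutes with presentations of strict monoidal categories. I expect no real obstacle here: the free strict monoidal category on reversed generators, modulo reversed relations, satisfies the universal property of the opposite. This can be checked by the same argument as in \cref{prop:presentation}, namely that strict monoidal functors out of either category into any $\mathsf{D}$ correspond to the same data. As an alternative, one can verify directly, mimicking the proof of \cref{prop:presentation}, that strict monoidal functors out of the category presented by $\underline f$, $\mathscr{k}_{x,z}$ and $\mathscr q$ correspond to oplax monoidal functors out of $\catC$. Since both classifiers are then determined up to isomorphism by their universal property, they agree.
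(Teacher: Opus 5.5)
Your proposal is correct and takes the paper's route: the corollary comes from the identification $\oplax(\mathsf{C}) = \lax(\mathsf{C}^{\on{op}})^{\on{op}}$ by reversing the generators and relations of $\lax(\mathsf{C}^{\on{op}})$. The paper does this pictorially, flipping the lax diagrams upside down; your generator-by-generator translation, including the dualised naturality, coassociativity and counit relations, does the same thing algebraically.
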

	The previous corollary is easy to see with diagrams; see~\eqref{eq:oplaxdiag} and~\eqref{eq:oplax_diag_identities}.

	\begin{definition}
		The Frobenius classifier $\frob(\mathsf{C})$ is defined as follows. We let $\Obj(\frob(\catC))$ be the free monoid on
		$\{\underline x\mid x\in \Obj \catC\}$. For all $x,y \in \mathsf{C}$, we add generating morphisms  $\{ \underline f: \underline x\to  \underline y \mid f\in \Hom(x,y)\}$ and further generators $\ell_{x,z}: \underline{x} \otimes \underline{z} \to \underline{x \otimes z}$ and
		$\mathscr{k}_{x,z}: \underline{x\otimes z} \rightarrow \underline{x} \otimes \underline{z}$. Finally, we add generating morphisms $\mathscr j:\emptyset \to \underline{\bbone}$ and $\mathscr{q}: \underline{\bbone} \rightarrow \emptyset$. 
		
		We impose all the relations of \Cref{def:laxcatL}, as well as ``oppositized'' variants of 
		\cref{item:LaxR2,item:LaxR3,item:LaxR4a,item:LaxR4b} for $\mathscr{k}_{x,x'}$ and $\mathscr{q}$, following \Cref{oplaxcopants}. 	Additionally, we impose the relations
		\begin{equation}\label{Frobeniusrelation1}
		\mathscr{k}_{x \otimes z,w} \circ \ell_{x,z\otimes w} = (\ell_{x,z}\otimes \on{id}_{\underline{w}}) \circ (\on{id}_{\underline{x}}\otimes \mathscr{k}_{z,w})
		\end{equation}
		and
		\begin{equation}\label{Frobeniusrelation2}
		\mathscr{k}_{x,z\otimes w} \circ \ell_{x\otimes z, w} = (\on{id}_{\underline{x}}\otimes \ell_{z,w}) \circ (\mathscr{k}_{x,z} \otimes \on{id}_{\underline{w}}).
		\end{equation}
	\end{definition}
	
	\begin{theorem}\label{Frobeniusclassifier}
		For strict monoidal categories $\mathsf C, \mathsf D$, there is an isomorphism of categories
		$\mathbf{Frob}(\mathsf{C,D}) \cong \mathbf{Strict}(\frob(\mathsf{C}), \mathsf{D})$.
	\end{theorem}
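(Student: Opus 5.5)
The plan is to mirror the proof of \Cref{prop:presentation} exactly, since $\frob(\mathsf{C})$ is again presented by generators and relations. A strict monoidal functor $\mathcal{F}:\frob(\mathsf{C})\to\mathsf{D}$ is determined by a function $\overline{\mathcal{F}_0}:\Obj(\mathsf{C})\to\Obj(\mathsf{D})$ (extended to a monoid morphism on the free monoid $\Obj(\frob(\mathsf{C}))$), together with the images of the generators $\underline{g}$, $\ell_{x,z}$, $\mathscr{k}_{x,z}$, $\mathscr j$ and $\mathscr q$. These images must satisfy the images of all the defining relations, and conversely any such assignment satisfying the relations extends uniquely to a strict monoidal functor. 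I will set $\overline{\mathcal{F}}(g):=\mathcal{F}(\underline g)$, $\mathscr{m}_{x,z}:=\mathcal{F}(\ell_{x,z})$, $\mathscr{u}:=\mathcal{F}(\mathscr j)$, $\mathscr{c}_{x,z}:=\mathcal{F}(\mathscr{k}_{x,z})$ and $\mathscr{e}:=\mathcal{F}(\mathscr q)$. Strictness of $\mathcal{F}$ gives $\mathcal{F}(\underline x\otimes\underline z)=\mathcal{F}(\underline x)\otimes\mathcal{F}(\underline z)$ and $\mathcal{F}(\emptyset)=\bbone_{\mathsf D}$, so these morphisms have the correct sources and targets.

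Next I will match the relations with the axioms. The relations of \Cref{def:laxcatL} say exactly that $\overline{\mathcal{F}}$ is a functor and that $(\mathscr m,\mathscr u)$ is a lax monoidal structure; this is verbatim the argument of \Cref{prop:presentation}. The oppositized relations for $\mathscr k$ and $\mathscr q$ say exactly that $(\mathscr c,\mathscr e)$ is an oplax monoidal structure. This follows by applying the same argument to $\mathsf{C}^{\on{op}},\mathsf{D}^{\on{op}}$, as in the oplax classifier proposition and \Cref{oplaxcopants}. Finally, relation \eqref{Frobeniusrelation1}, once $\mathcal F$ is applied, reads $\mathscr{c}_{x\otimes z,w}\circ\mathscr{m}_{x,z\otimes w}=(\mathscr{m}_{x,z}\otimes\id)\circ(\id\otimes\mathscr{c}_{z,w})$. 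This is the first square of \eqref{Frobeniusaxiom}. Likewise \eqref{Frobeniusrelation2} becomes the second square. So objects on both sides are in bijection.

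For morphisms, a monoidal transformation $\sigma$ between strict monoidal functors out of $\frob(\mathsf{C})$ satisfies $\sigma_{\underline x\otimes\underline z}=\sigma_{\underline x}\otimes\sigma_{\underline z}$ and $\sigma_\emptyset=\id$. It is therefore determined by the components $\overline\sigma_x:=\sigma_{\underline x}$. Naturality of $\sigma$ need only be checked on generators, since naturality is preserved under composition and tensor products of morphisms.
\begin{itemize}
\item Naturality at the generators $\underline g$ is naturality of $\overline\sigma$.
\item Naturality at $\ell_{x,z}$ and $\mathscr j$ is compatibility of $\overline\sigma$ with the lax structure, including the unit.
\item Naturality at $\mathscr k_{x,z}$ and $\mathscr q$ is compatibility with the oplax structure.
\end{itemize}
Together these say that $\overline\sigma$ is simultaneously a morphism of lax and of oplax monoidal functors, which is exactly a morphism in $\mathbf{Frob}(\mathsf{C,D})$. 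Functoriality of the assignment $\sigma\mapsto\overline\sigma$ is immediate, because composition is computed componentwise on both sides.

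The main obstacle is to make sure there is no subtlety hidden in the phrase ``determined by images of generators''. Concretely, one must check that the universal property of a strict monoidal category presented by generators and relations really yields a unique strict monoidal extension. This holds because $\frob(\mathsf C)$ is, by construction, the free strict monoidal category on the given computad modulo the congruence generated by the listed relations, so it suffices to invoke that construction. A second point needs care: the definition of morphisms in $\mathbf{Frob}$ must include unit and counit compatibility. This compatibility is already part of being a morphism of lax and of oplax functors, with $\sigma_{\bbone_{\mathsf C}}\circ\mathscr u^{\mathcal F}=\mathscr u^{\mathcal G}$ and $\mathscr e^{\mathcal G}\circ\sigma_{\bbone_{\mathsf C}}=\mathscr e^{\mathcal F}$, and it matches naturality at $\mathscr j$ and $\mathscr q$.
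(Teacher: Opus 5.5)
Your proposal is correct and follows the paper's proof essentially step for step: you read off the data from the images of the generators, match the relations of \Cref{def:laxcatL}, their oppositized variants, and \eqref{Frobeniusrelation1}, \eqref{Frobeniusrelation2} with the lax, oplax and Frobenius axioms \eqref{Frobeniusaxiom}, and identify transformations through the components $\sigma_{\underline x}$. Your explicit handling of $\sigma_\emptyset=\id$ and of the unit and counit compatibilities is slightly more careful than the paper: its stated definition of monoidal transformation lists only the multiplicative condition, yet the unit condition is what makes naturality at $\mathscr j$ and $\mathscr q$ match the corresponding axioms.
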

	\begin{proof}
		Let $\mathcal{F}: \frob(\mathsf{C}) \rightarrow \mathsf{D}$ be a strict monoidal functor. Similar to the proof of \Cref{prop:presentation}, the assignments $\overline{\mathcal{F}}(x) = \mathcal{F}(\underline{x})$, $\overline{\mathcal{F}}(f) = \mathcal{F}(\underline{f})$ define a functor $\overline{\mathcal{F}}: \mathsf{C} \rightarrow \mathsf{D}$, and the maps $\mathcal{F}(\ell_{x,z})$ and $\mathcal{F}(\mathscr j)$ define a lax monoidal structure on $\overline{\mathcal{F}}$. Following \Cref{oplaxcopants}, the maps $\mathcal{F}(\mathscr{k}_{x,z})$ and $\mathcal{F}(\mathscr{q})$ define an oplax monoidal structure on $\overline{\mathcal{F}}$. The relations~(\ref{Frobeniusrelation1}, \ref{Frobeniusrelation2}) are equivalent to the axiom~\eqref{Frobeniusaxiom} making $\overline{\mathcal{F}}$ a Frobenius monoidal functor. Also the correspondence for natural transformations extends similarly to the proof of \Cref{prop:presentation}.
	\end{proof}
	Let $\mathcal{E}: \mathsf{C} \rightarrow \frob(\mathsf{C})$ be the functor corresponding to $\on{Id}_{\frob(\mathsf{C})}$ under the correspondence of \Cref{Frobeniusclassifier}. The following is very easy to verify.
	\begin{lemma}\label{factorize}
		Given a Frobenius monoidal functor $\mathcal{F}: \mathsf{C}\rightarrow \mathsf{D}$, we have $\mathcal{F} = \overline{\mathcal{F}}\circ \mathcal{E}$.
	\end{lemma}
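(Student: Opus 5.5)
Here $\overline{\mathcal{F}}$ is read as the strict monoidal functor $\frob(\mathsf{C}) \rightarrow \mathsf{D}$ that corresponds to the Frobenius monoidal functor $\mathcal{F}$ under the isomorphism of \Cref{Frobeniusclassifier}. The equality $\mathcal{F} = \overline{\mathcal{F}}\circ\mathcal{E}$ is then meant as an equality of Frobenius monoidal functors. The composite of a Frobenius monoidal functor with a strict monoidal functor is again Frobenius monoidal, with structure maps obtained by applying the strict functor to the original ones. So we must check that $\overline{\mathcal{F}}\circ\mathcal{E}$ agrees with $\mathcal{F}$ in four places: on objects, on morphisms, in the lax structure $(\mathscr{m},\mathscr{u})$, and in the oplax structure $(\mathscr{c},\mathscr{e})$.

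The first step is to make $\mathcal{E}$ explicit by unwinding the construction in the proof of \Cref{Frobeniusclassifier}, applied to $\on{Id}_{\frob(\mathsf{C})}$. This gives $\mathcal{E}(x) = \underline{x}$ on objects and $\mathcal{E}(f) = \underline{f}$ on morphisms. Its structure maps are $\mathscr{m}^{\mathcal{E}}_{x,z} = \ell_{x,z}$, $\mathscr{u}^{\mathcal{E}} = \mathscr j$, $\mathscr{c}^{\mathcal{E}}_{x,z} = \mathscr{k}_{x,z}$ and $\mathscr{e}^{\mathcal{E}} = \mathscr{q}$.

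The second step unwinds the inverse direction of the bijection in \Cref{Frobeniusclassifier} (as in \Cref{prop:presentation}). The functor $\overline{\mathcal{F}}$ is the unique strict monoidal functor determined on generators by the following assignments:
\begin{itemize}
\item $\underline{x}\mapsto \mathcal{F}(x)$ and $\underline{f}\mapsto\mathcal{F}(f)$;
\item $\ell_{x,z}\mapsto \mathscr{m}^{\mathcal{F}}_{x,z}$ and $\mathscr{j}\mapsto\mathscr{u}^{\mathcal{F}}$;
\item $\mathscr{k}_{x,z}\mapsto\mathscr{c}^{\mathcal{F}}_{x,z}$ and $\mathscr{q}\mapsto\mathscr{e}^{\mathcal{F}}$.
\end{itemize}
This functor is well defined precisely because $\mathcal{F}$ satisfies the axioms corresponding to the defining relations of $\frob(\mathsf{C})$.

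The third step is a direct comparison. On objects and morphisms, $\overline{\mathcal{F}}(\mathcal{E}(x)) = \overline{\mathcal{F}}(\underline{x}) = \mathcal{F}(x)$, and similarly for morphisms. For the structure maps, $\mathscr{m}^{\overline{\mathcal{F}}\circ\mathcal{E}}_{x,z} = \overline{\mathcal{F}}(\mathscr{m}^{\mathcal{E}}_{x,z}) = \overline{\mathcal{F}}(\ell_{x,z}) = \mathscr{m}^{\mathcal{F}}_{x,z}$. The same computation applies to $\mathscr{u}$, $\mathscr{c}$ and $\mathscr{e}$.

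No step is a real obstacle. The only care needed is bookkeeping in the second step: the structure maps of a composite with a strict functor must really be just the images, with no extra coherence isomorphisms. This holds because $\overline{\mathcal{F}}$ is strict, so $\overline{\mathcal{F}}(\underline{x}\otimes\underline{z}) = \overline{\mathcal{F}}(\underline{x})\otimes\overline{\mathcal{F}}(\underline{z})$ on the nose.
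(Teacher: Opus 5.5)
Your proposal is correct and matches the paper's approach: the paper gives no written proof, only calling the lemma ``very easy to verify'', and your argument is that verification. You identify $\mathcal{E}$ as $x\mapsto\underline{x}$, $f\mapsto\underline{f}$ with structure maps $\ell_{x,z}$, $\mathscr{j}$, $\mathscr{k}_{x,z}$, $\mathscr{q}$, and note that composing with the strict functor $\overline{\mathcal{F}}$ simply applies it to these data, which by construction returns the structure of $\mathcal{F}$.
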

	
		\section{Diagrammatic interpretation}\label{sec:diagrammatic}
	The generators and relations for the classifiers $\lax(\mathsf{C})$, $\oplax(\mathsf C)$ and $\frob (\mathsf C)$ defined in \Cref{sec:construction}	can be interpreted using diagrams very similar to those of \cite{McCurdy12,PS13,PS22,Mulevicius24}. More precisely, we interpret the classifier by enclosing the string calculus of the monoidal category inside an envelope.

	We now describe the diagrammatics. For all $x,y,z,w\in\mathsf C$,  $f \in \on{Hom}_{\mathsf{C}}(x,y)$, $g \in \on{Hom}_{\mathsf{C}}(y,z)$, $h\in \on{Hom}_{\mathsf C}(z,w)$, we associate
	\begin{equation} \label{eq:standard_envelope}
	\begin{aligned}
	\id_x &\mapsto \quad
	\begin{tikzpicture}[cob]
	\draw[thick,teal] (0,0) node[below]{$\underline x$} --  (0,2) node[above]{$\underline x$};
	\draw[framecob] (-.5,0) -- (.5,0) -- (.5,2) -- (-.5,2) -- cycle;
	\end{tikzpicture} \ ,
	&\quad
	f
	&\mapsto  \begin{tikzpicture}[cob]
	\draw[thick,teal] (0,0) node[below]{$\underline x$} -- (0,1-.25);
	\draw[thick,teal] (0,2) node[above]{$\underline y$} -- (0,1+.25);
	\draw[thick,teal] (0,1) circle (.25) node{ $f$};
	\draw[framecob] (-.5,0) -- (.5,0) -- (.5,2) -- (-.5,2) -- cycle;
	\end{tikzpicture}\ , \quad & \text{with composition respecting} \quad \begin{tikzpicture}[cob]
	\draw[thick,teal] (0,0) node[below]{$\underline x$} -- (0,1-.25);
	\draw[thick,teal] (0,2)  -- (0,1+.25);
	\draw[thick,teal] (0,1) circle (.25) node{$f$};
	\draw[framecob] (-.5,0) -- (.5,0) -- (.5,2) -- (-.5,2) -- cycle;
	\draw[thick,teal] (0,2)  -- (0,1-.25+2);
	\draw[teal] (0,2) node[above right ]{$\underline y$};
	\draw[thick,teal] (0,4) node[above]{$\underline z$} -- (0,1+2.25);
	\draw[thick,teal] (0,1+2) circle (.25) node{$g$};
	\draw[framecob] (-.5,0+2) -- (.5,0+2) -- (.5,2+2) -- (-.5,2+2) -- cycle;
	\end{tikzpicture} 
	\ &= \
	\begin{tikzpicture}[cob]
	\draw[thick,teal] (0,0) node[below]{$\underline x$} -- (0,2-.42);
	\draw[thick,teal] (0,4) node[above]{$\underline z$}  -- (0,2+.42);
	\draw[thick,teal] (0,2) circle (.42) node{ $g\!\circ\!\! f$};
	\draw[framecob] (-.5,0) -- (.5,0) -- (.5,4) -- (-.5,4) -- cycle;
	\end{tikzpicture}\ .
	\end{aligned}
	\end{equation}
	%%%%%
	%lax%
	%%%%%
	The extra morphisms of $\lax(\mathsf{C})$ are given diagrammatically by the following diagrams for $x,z\in \Obj(\mathsf{C})$
	\begin{equation}\label{eq:laxdiag}
	\begin{aligned}
	\ell_{x,z} &\mapsto \begin{tikzpicture}[cob]
	\draw[thick, teal] (-.5,2) node[below] {$\underline{x}$} -- (.33,4) node[above] {$\underline{x}$};
	\draw[thick, teal] (1.5,2) node[below] {$\underline{z}$} -- (.66,4) node[above] { $\underline{z}$};
	\draw[framecob] (-1,2) -- (0,4) -- (1,4) -- (2,2) -- (1,2) .. controls (1,3) and (0,3).. (0,2)--cycle;
	\end{tikzpicture}\ , 
	&&
	\mathscr j &\mapsto 
	\begin{tikzpicture}[cob]
	\draw[framecob] (0,0) .. controls (0,-1) and (1,-1) .. (1,0) -- cycle;
	\end{tikzpicture}\ , 
	& \quad \text{with compatibility} \quad 
	\begin{tikzpicture}[cob]
	%first leg
	\draw[framecob] (-1,0) -- (-1,2) -- (0,2) -- (0,0) -- cycle;
	\draw[thick, teal] (-.5,0) node[below]{$\underline x$} -- (-.5,.75);
	\draw[thick, teal] (-.5,2) -- (-.5,1.25);
	\draw[thick, teal] (-.5,1) node{$f$} circle(.25);
	%second leg
	\draw[framecob] (2,0) -- (2,2) -- (1,2) -- (1,0) -- cycle;
	\draw[thick, teal] (2-.5,0) node[below]{$\underline z$} -- (2-.5,.75);
	\draw[thick, teal] (2-.5,2) -- (2-.5,1.25);
	\draw[thick, teal] (2-.5,1) node{$h$} circle(.25);
	%top
	\draw[thick, teal] (-.5,2) -- (.33,4) node[above] { $\underline{y}$};
	\draw[thick, teal] (1.5,2) -- (.66,4) node[above] { $\underline{w}$};
	\draw[framecob] (-1,2) -- (0,4) -- (1,4) -- (2,2) -- (1,2) .. controls (1,3) and (0,3) ..  (0,2)--cycle;
	\end{tikzpicture}\
	&  =
	\begin{tikzpicture}[cob]
	%top
	\draw[thick, teal] (.25,4) -- (.25,4.8);
	\draw[thick, teal] (.25,6) node[above]{$\underline y$} -- (.25,5.2);
	\draw[thick, teal] (.75,4) -- (.75,4.8);
	\draw[thick, teal] (.75,6) node[above]{$\underline w$}-- (.75,5.2);
	\draw[thick, teal] (.25,5) node{$f$} circle(.2);
	\draw[thick, teal] (.75,5) node{$h$} circle(.2);
	\draw[framecob] (0,4) -- (0,6) -- (1,6) -- (1,4) -- cycle;
	%top
	\draw[thick, teal] (-.5,2) node[below] { $\underline{x}$} -- (.25,4) ;
	\draw[thick, teal] (1.5,2) node[below] {$\underline{z}$} -- (.75,4);
	\draw[framecob] (-1,2) -- (0,4) -- (1,4) -- (2,2) -- (1,2) .. controls (1,3) and (0,3) .. (0,2)--cycle;
	\end{tikzpicture}.
	\end{aligned}
	\end{equation}
	The relations for $\lax(\mathsf{C})$ correspond to the following identities: 
	\begin{align}\label{eq:lax_diag_identities}
	%asso and unit			
	%\label{eq:lax_asso_unit}
	\begin{tikzpicture}[cob]
	\draw[framecob] (0,0) -- (1,2) -- (2,2) -- (3,0) -- (2,0) .. controls (2,1) and (1,1) ..  (1,0)--cycle;
	\draw[thick, teal] (.5,0)node[below] {$\underline{x}$} -- (1.33,2) -- (2.25,4) node[above left] {$\underline{x}$};
	\draw[thick, teal] (2.5,0)node[below] { $\underline{z}$} -- (1.66,2) -- (2.5,4) node[above] {$\underline{z}$};
	\draw[framecob] (1,2) -- (2,4) -- (3,4)--(5,0) -- (4,0) -- (3,2) .. controls (3,3) and (2,3) ..  %(2.5,3) -- 
	(2,2) -- cycle;
	\draw[thick, teal] (4.5,0) node[below] { $\underline{w}$} -- (2.75,4) node[above right] {$\underline{w}$};
	\end{tikzpicture}
	&=  \begin{tikzpicture}[cob,yscale=1,xscale=-1]
	\draw[framecob] (0,0) -- (1,2) -- (2,2) -- (3,0) -- (2,0) .. controls (2,1) and (1,1) ..  (1,0)--cycle;
	\draw[thick, teal] (.5,0)node[below] { $\underline{w}$} -- (1.33,2) -- (2.25,4) node[above right] { $\underline{w}$};
	\draw[thick, teal] (2.5,0)node[below] { $\underline{z}$} -- (1.66,2) -- (2.5,4) node[above] { $\underline{z}$};
	\draw[framecob] (1,2) -- (2,4) -- (3,4)--(5,0) -- (4,0) -- (3,2) .. controls (3,3) and (2,3) .. (2,2) -- cycle;
	\draw[thick, teal] (4.5,0) node[below] {$\underline{x}$} -- (2.75,4) node[above left] { $\underline{x}$};
	\end{tikzpicture},
	&&
	%Unit
	\begin{tikzpicture}[cob]
	\draw[framecob] (0,0) -- (0,2) -- (1,2) -- (1,0) -- cycle;
	\draw[thick, teal] (0.5,0) node[below]{ $\underline x$} -- (0.5,2) -- (1.5,4) node[above]{ $\underline x$};
	\draw[framecob] (0,2) -- (1,4) -- (2,4) -- (3,2) -- (2,2) .. controls (2,3) and (1,3) ..  (1,2)--cycle;
	\draw[framecob] (2,2) .. controls (2, 1) and (3,1) .. (3,2);
	\end{tikzpicture} \
	&=  \
	\begin{tikzpicture}[cob,xscale=-1]
	\draw[framecob] (0,0) -- (0,2) -- (1,2) -- (1,0) -- cycle;
	\draw[thick, teal] (0.5,0) node[below]{$\underline x$} -- (0.5,2) -- (1.5,4) node[above]{$\underline x$};
	\draw[framecob] (0,2) -- (1,4) -- (2,4) -- (3,2) -- (2,2) .. controls (2,3) and (1,3) ..  (1,2)--cycle;
	\draw[framecob] (2,2) .. controls (2, 1) and (3,1) .. (3,2);
	\end{tikzpicture}
	\ = \ 
	\begin{tikzpicture}[cob]
	\draw[thick,teal] (0,0) node[below]{$\underline x$} --  (0,2) node[above]{$\underline x$};
	\draw[framecob] (-.5,0) -- (.5,0) -- (.5,2) -- (-.5,2) -- cycle;
	\end{tikzpicture}\ .
	\end{align}
	%		\end{alignat}
	%	\end{subequations}
	%%%%%%%
	%oplax%
	%%%%%%%
	For $\oplax(\mathsf{C})$ we have instead the diagrams
	\begin{equation}
	\label{eq:oplaxdiag}
	\begin{aligned}
	\mathscr{k}_{x,z} &\mapsto \begin{tikzpicture}[cob,yscale=-1]
	\draw[thick, teal] (-.5,2) node[above] { $\underline{x}$} -- (.33,4) node[below] {$\underline{x}$};
	\draw[thick, teal] (1.5,2) node[above] {\footnotesize $\underline{z}$} -- (.66,4) node[below] {$\underline{z}$};
	\draw[framecob] (-1,2) -- (0,4) -- (1,4) -- (2,2) -- (1,2) .. controls (1,3) and (0,3) ..  (0,2)--cycle;
	\end{tikzpicture}, 
	&& 
	\mathscr q &\mapsto\ 
	\begin{tikzpicture}[cob]
	\draw[framecob] (0,0) .. controls (0,1) and (1,1) .. (1,0) -- cycle;
	\end{tikzpicture}\ , 
	\quad \text{with compatibility} \quad \begin{tikzpicture}[cob, yscale=-1]
	%first leg
	\draw[framecob] (-1,0) -- (-1,2) -- (0,2) -- (0,0) -- cycle;
	\draw[thick, teal] (-.5,0) node[above]{$\underline y$} -- (-.5,.75);
	\draw[thick, teal] (-.5,2) -- (-.5,1.25);
	\draw[thick, teal] (-.5,1) node{$f$} circle(.25);
	%second leg
	\draw[framecob] (2,0) -- (2,2) -- (1,2) -- (1,0) -- cycle;
	\draw[thick, teal] (2-.5,0) node[above]{$\underline w$} -- (2-.5,.75);
	\draw[thick, teal] (2-.5,2) -- (2-.5,1.25);
	\draw[thick, teal] (2-.5,1) node{$h$} circle(.25);
	%top
	\draw[thick, teal] (-.5,2) -- (.33,4) node[below] { $\underline{x}$};
	\draw[thick, teal] (1.5,2) -- (.66,4) node[below] { $\underline{z}$};
	\draw[framecob] (-1,2) -- (0,4) -- (1,4) -- (2,2) -- (1,2) .. controls (1,3) and (0,3) ..  (0,2)--cycle;
	\end{tikzpicture}\
	&  =
	\begin{tikzpicture}[cob, yscale=-1]
	%top
	\draw[thick, teal] (.25,4) -- (.25,4.8);
	\draw[thick, teal] (.25,6) node[below]{$\underline x$} -- (.25,5.2);
	\draw[thick, teal] (.75,4) -- (.75,4.8);
	\draw[thick, teal] (.75,6) node[below]{$\underline z$}-- (.75,5.2);
	\draw[thick, teal] (.25,5) node{$f$} circle(.2);
	\draw[thick, teal] (.75,5) node{$h$} circle(.2);
	\draw[framecob] (0,4) -- (0,6) -- (1,6) -- (1,4) -- cycle;
	%top
	\draw[thick, teal] (-.5,2) node[above] { $\underline{y}$} -- (.25,4) ;
	\draw[thick, teal] (1.5,2) node[above] {$\underline{w}$} -- (.75,4);
	\draw[framecob] (-1,2) -- (0,4) -- (1,4) -- (2,2) -- (1,2) .. controls (1,3) and (0,3) .. (0,2)--cycle;
	\end{tikzpicture},
	\end{aligned}
	\end{equation}
	and the relations %for $\oplax(\mathsf C)$
	correspond to the identities reversed from~\eqref{eq:lax_diag_identities}, reproduced below for convenience,
	
	\begin{align}
	\label{eq:oplax_diag_identities}
	%\label{eq:diag_oplax_functor}
	%coasso and counit
	%\label{eq:oplax_asso_unit}
	\begin{tikzpicture}[cob,yscale=-1]
	\draw[framecob] (0,0) -- (1,2) -- (2,2) -- (3,0) -- (2,0) .. controls (2,1) and (1,1) ..  (1,0)--cycle;
	\draw[thick, teal] (.5,0)node[above] {$\underline{x}$} -- (1.33,2) -- (2.25,4) node[below left] {$\underline{x}$};
	\draw[thick, teal] (2.5,0)node[above] { $\underline{z}$} -- (1.66,2) -- (2.5,4) node[below] {$\underline{z}$};
	\draw[framecob] (1,2) -- (2,4) -- (3,4)--(5,0) -- (4,0) -- (3,2) .. controls (3,3) and (2,3) ..  %(2.5,3) -- 
	(2,2) -- cycle;
	\draw[thick, teal] (4.5,0) node[above] { $\underline{w}$} -- (2.75,4) node[below right] {$\underline{w}$};
	\end{tikzpicture}
	&=  
	\begin{tikzpicture}[cob,yscale=-1,xscale=-1]
	\draw[framecob] (0,0) -- (1,2) -- (2,2) -- (3,0) -- (2,0) .. controls (2,1) and (1,1) ..  (1,0)--cycle;
	\draw[thick, teal] (.5,0)node[above] { $\underline{w}$} -- (1.33,2) -- (2.25,4) node[below right] { $\underline{w}$};
	\draw[thick, teal] (2.5,0)node[above] { $\underline{z}$} -- (1.66,2) -- (2.5,4) node[below] { $\underline{z}$};
	\draw[framecob] (1,2) -- (2,4) -- (3,4)--(5,0) -- (4,0) -- (3,2) .. controls (3,3) and (2,3) .. (2,2) -- cycle;
	\draw[thick, teal] (4.5,0) node[above] {$\underline{x}$} -- (2.75,4) node[below left] { $\underline{x}$};
	\end{tikzpicture},
	&
	%Counit
	\begin{tikzpicture}[cob,yscale=-1]
	\draw[framecob] (0,0) -- (0,2) -- (1,2) -- (1,0) -- cycle;
	\draw[thick, teal] (0.5,0) node[above]{ $\underline x$} -- (0.5,2) -- (1.5,4) node[below]{ $\underline x$};
	\draw[framecob] (0,2) -- (1,4) -- (2,4) -- (3,2) -- (2,2) .. controls (2,3) and (1,3) ..  (1,2)--cycle;
	\draw[framecob] (2,2) .. controls (2, 1) and (3,1) .. (3,2);
	\end{tikzpicture} \
	&=  \
	\begin{tikzpicture}[cob,xscale=-1,yscale=-1]
	\draw[framecob] (0,0) -- (0,2) -- (1,2) -- (1,0) -- cycle;
	\draw[thick, teal] (0.5,0) node[above]{$\underline x$} -- (0.5,2) -- (1.5,4) node[below]{$\underline x$};
	\draw[framecob] (0,2) -- (1,4) -- (2,4) -- (3,2) -- (2,2) .. controls (2,3) and (1,3) ..  (1,2)--cycle;
	\draw[framecob] (2,2) .. controls (2, 1) and (3,1) .. (3,2);
	\end{tikzpicture}
	\ = \ 
	\begin{tikzpicture}[cob]
	\draw[thick,teal] (0,0) node[below]{$\underline x$} --  (0,2) node[above]{$\underline x$};
	\draw[framecob] (-.5,0) -- (.5,0) -- (.5,2) -- (-.5,2) -- cycle;
	\end{tikzpicture}\ .
	\end{align}

	%%%%%%
	%Frob%
	%%%%%%
	
	Finally, for $\mathsf{F}(\mathsf{C})$ we have both \eqref{eq:laxdiag} and \eqref{eq:oplaxdiag} with relations~\eqref{eq:lax_diag_identities} and~\eqref{eq:oplax_diag_identities} augmented by the following Frobenius compatibility relations
	
	\begin{align}\label{eq:Frob_diag_identities}
	\begin{tikzpicture}[cob]
	\draw[framecob] (0,0) -- (1,0) .. controls (1,1) and (2,1).. (2,0) -- (3,0) -- (2,2) -- (3,4) -- (2,4) .. controls (2,3) and (1,3) .. (1,4) -- (0,4) -- (1,2) -- cycle;
	\draw[thick,teal] (0.33,0) node[below]{$\underline x$} .. controls (1.5,2) and (1.5,2) .. (0.5,4) node[above]{$\underline x$};
	\draw[thick,teal] (0.66,0) node[below]{$\underline z$} -- (2.33,4) node[above]{$\underline z$};
	\draw[thick,teal] (2.5,0) node[below]{$\underline w$} .. controls (1.5,2) and (1.5,2) .. (2.66,4) node[above]{$\underline w$};
	\end{tikzpicture}
	\quad &= \quad
	\begin{tikzpicture}[cob, xscale=1, yscale=-1]
	\draw[framecob] (0,0) -- (1,0) --(1,2) .. controls (1,3) and (2,3) .. (2,2) -- (3,0) -- (4,0) -- (5,2) -- (5,4) -- (4,4) -- (4,2) .. controls (4,1) and (3,1).. (3,2)-- (2,4) -- (1,4) -- (0,2) -- cycle;
	\draw[thick,teal] (1.33,4) node[below]{$\underline x$} .. controls (1.33,3) and (.5,3) .. (0.5,0) node[above]{$\underline x$};
	\draw[thick,teal] (1.66,4) node[below]{$\underline z$} -- (3.33,0) node[above]{$\underline z$};
	\draw[thick,teal] (4.5,4)node[below]{$\underline w$} .. controls (4.5,1) and (3.66,1) .. (3.66,0) node[above]{$\underline w$};
	\end{tikzpicture}\ ,  & \quad
	\begin{tikzpicture}[cob,xscale=-1]
	\draw[framecob] (0,0) -- (1,0) .. controls (1,1) and (2,1) .. (2,0) -- (3,0) -- (2,2) -- (3,4) -- (2,4) .. controls (2,3) and (1,3) ..  (1,4) -- (0,4) -- (1,2) -- cycle;
	\draw[thick,teal] (0.33,0) node[below]{$\underline w$} .. controls (1.5,2) and (1.5,2) .. (0.5,4) node[above]{$\underline w$};
	\draw[thick,teal] (0.66,0) node[below]{$\underline z$} -- (2.33,4) node[above]{$\underline z$};
	\draw[thick,teal] (2.5,0) node[below]{$\underline x$} .. controls (1.5,2) and (1.5,2) .. (2.66,4) node[above]{$\underline x$};
	\end{tikzpicture}
	\quad &= \quad
	\begin{tikzpicture}[cob,xscale=-1, yscale=-1]
	\draw[framecob] (0,0) -- (1,0) --(1,2) .. controls (1,3) and (2,3) .. (2,2) -- (3,0) -- (4,0) -- (5,2) -- (5,4) -- (4,4) -- (4,2) .. controls (4,1) and (3,1).. (3,2)-- (2,4) -- (1,4) -- (0,2) -- cycle;
	\draw[thick,teal] (1.33,4) node[below]{$\underline w$} .. controls (1.33,3.5) and (.5,2.5) .. (0.5,0) node[above]{$\underline w$};
	\draw[thick,teal] (1.66,4) node[below]{$\underline z$} -- (3.33,0) node[above]{$\underline z$};
	\draw[thick,teal] (4.5,4) node[below]{$\underline x$} .. controls (4.5,1) and (3.66,1) .. (3.66,0) node[above]{$\underline x$};
	\end{tikzpicture}\ .
	\end{align}

\begin{remark}
		Mulevi\v{c}ius considered a similar graphical calculus for monoidal functors, embedding string calculus in cylindrical ``tubes'' in the context of ribbon Frobenius functors~\cite[Fig.~4.1 (F1--F3)]{Mulevicius24}; see also Ponto--Schulman for lax symmetric monoidal functors~\cite[Fig.~7]{PS13}. Mulevi\v{c}ius' calculus also allows for braided and ribbon Frobenius functors. 
	\end{remark}
	
	\begin{lemma}\label{embeddingpreserving}
		Let $x$ be a right rigid object of $\mathsf{C}$ and let $x^{\ast}$ be a rigid right dual of $x$. Then $\underline{x^{\ast}}$ is a rigid right dual of $\underline{x}$ in $\frob(\mathsf{C})$. 
	\end{lemma}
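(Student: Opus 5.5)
We will build evaluation and coevaluation morphisms for $\underline{x}$ and $\underline{x^\ast}$ in $\frob(\mathsf{C})$ out of the corresponding maps in $\mathsf{C}$, using the lax structure on one side and the oplax structure on the other. Write $\operatorname{ev}: x^\ast\otimes x\to \bbone$ and $\operatorname{coev}: \bbone\to x\otimes x^\ast$ for the structure maps, which satisfy the snake identities $(\id_x\otimes \operatorname{ev})\circ(\operatorname{coev}\otimes \id_x)=\id_x$ and $(\operatorname{ev}\otimes\id_{x^\ast})\circ(\id_{x^\ast}\otimes\operatorname{coev})=\id_{x^\ast}$. We define
\[
\underline{\operatorname{ev}}' := \mathscr q\circ \underline{\operatorname{ev}}\circ \ell_{x^\ast,x}:\underline{x^\ast}\otimes\underline{x}\to\emptyset,
\qquad
\underline{\operatorname{coev}}' := \mathscr k_{x,x^\ast}\circ\underline{\operatorname{coev}}\circ\mathscr j:\emptyset\to\underline{x}\otimes\underline{x^\ast}.
\]
These are the standard formulas showing that a Frobenius monoidal functor preserves duals. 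Diagrammatically, each one caps or cups the envelope and places the string cap or cup inside it.

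Next we check the first snake identity, $(\id_{\underline x}\otimes\underline{\operatorname{ev}}')\circ(\underline{\operatorname{coev}}'\otimes\id_{\underline x})=\id_{\underline x}$; the second is symmetric.

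1. Expand the left-hand side. In the middle it contains $(\id_{\underline x}\otimes \ell_{x^\ast,x})\circ(\mathscr k_{x,x^\ast}\otimes \id_{\underline x})$.
2. Rewrite this middle piece using the Frobenius relation \eqref{Frobeniusrelation2}, read from right to left, as $\mathscr k_{x,x^\ast\otimes x}\circ \ell_{x\otimes x^\ast,x}$.
3. Use naturality of $\ell$ (relation \eqref{item:LaxR2}, with $\id_{\underline x}$ on the other leg together with \eqref{item:LaxR0}) to move $\underline{\operatorname{coev}}\otimes\id_{\underline x}$ past $\ell$. This gives $\underline{\operatorname{coev}\otimes\id_x}\circ\ell_{\bbone,x}$, and unitality \eqref{item:LaxR4b} turns $\ell_{\bbone,x}\circ(\mathscr j\otimes\id_{\underline x})$ into $\id_{\underline x}$.
4. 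Symmetrically, use the oplax naturality of $\mathscr k$ to move $\id_{\underline x}\otimes\underline{\operatorname{ev}}$ past $\mathscr k$, and then the oplax counit relation to absorb $\mathscr q$ against $\mathscr k_{x,\bbone}$.
5. What remains is $\underline{(\id_x\otimes\operatorname{ev})}\circ\underline{(\operatorname{coev}\otimes\id_x)}$. By \eqref{item:LaxR1} this equals $\underline{\id_x}=\id_{\underline x}$ by the snake identity in $\mathsf{C}$ and \eqref{item:LaxR0}.

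Diagrammatically, the Frobenius relation \eqref{eq:Frob_diag_identities} straightens the zigzag of the envelope into a single tube. The unit and counit relations then remove the cup and cap of the envelope, and the strings inside satisfy the snake identity of $\mathsf{C}$.

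The main obstacle is the bookkeeping in step 2. We must check that the orientation of the Frobenius relation matches the zigzag shape: \eqref{Frobeniusrelation2} applies to one snake and \eqref{Frobeniusrelation1} to the other. We also need the precise form of the oppositized naturality and counit relations for $\mathscr k$ and $\mathscr q$ from \Cref{oplaxcopants}. In particular, step 4 needs the counit relation $(\id_{\underline x}\otimes\mathscr q)\circ\mathscr k_{x,\bbone}=\id_{\underline x}$. Once the relations are lined up, everything else is a direct rewriting.
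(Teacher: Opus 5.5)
Your proposal is correct and follows the paper's proof. The paper defines the same evaluation and coevaluation, $\mathscr q\circ\underline{\varepsilon}\circ\ell_{x,x^\ast}$ and $\mathscr k_{x^\ast,x}\circ\underline{\eta}\circ\mathscr j$, as the capped and cupped envelopes in \eqref{eq:frob_diag_rigid}, and asserts the zigzag identities only pictorially, whereas you carry out the rewriting explicitly via \eqref{Frobeniusrelation1}, \eqref{Frobeniusrelation2}, naturality and the (co)unit relations; your duality convention ($\operatorname{ev}\colon x^\ast\otimes x\to\bbone$) is the mirror of the paper's ($\eta\colon\bbone\to x^\ast\otimes x$, $\varepsilon\colon x\otimes x^\ast\to\bbone$), but this is harmless because your argument sends any dual pair in $\mathsf{C}$ to a dual pair in $\frob(\mathsf{C})$.
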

	
	\begin{proof}
		Let $\eta: \bbone \rightarrow x^{\ast}\otimes x$ and $\varepsilon: x\otimes x^{\ast} \rightarrow \bbone$ be the unit and counit for the duality, which we denote in the string calculus of $\mathsf C$ and in the diagrammatics of $\frob (\mathsf C)$  as
		\begin{equation}
		\begin{aligned}
		\eta &= \begin{tikzpicture}[baseline=0,
		scale=.7,font=\scriptsize]
		\draw[thick, teal] (0.75,1) node[above]{$x$} .. controls (0.75,0.5) and (0.25,0.5) .. (0.25,1) node[above]{$x^*$};
		\draw[teal] (.5,0) node {$\bbone$}; 
		\end{tikzpicture}  &
		\varepsilon &= 	\begin{tikzpicture}[baseline=0,
		scale=.7,font=\scriptsize]
		\draw[thick, teal] (0.75,0) node[below]{$x^*$} .. controls (0.75,0.5) and (0.25,0.5) .. (0.25,0) node[below]{$x^{\phantom{*}}$};
		\draw[teal] (.5,1) node {$\bbone$};
		\end{tikzpicture}\ , &\qquad
		\eta &\mapsto 
		\begin{tikzpicture}[baseline=0, scale=.7,font=\scriptsize]
		\draw[framecob] (0,0) rectangle (1,1);
		\draw[thick, teal] (0.75,1) node[above]{$x$} .. controls (0.75,0.5) and (0.25,0.5) .. (0.25,1) node[above]{$x^*$};
		\end{tikzpicture}\ , &
		\varepsilon &\mapsto 
		\begin{tikzpicture}[baseline=0,scale=.7,font=\scriptsize]
		\draw[framecob] (0,0) rectangle (1,1);
		\draw[thick, teal] (0.75,0) node[below]{$x^*$} .. controls (0.75,0.5) and (0.25,0.5) .. (0.25,0) node[below]{$x^{\phantom{*}}$};
		\end{tikzpicture}\ .
		\end{aligned}
		\end{equation}
		Then the morphisms
		\begin{equation}\label{eq:frob_diag_rigid}
		\begin{aligned}
		\begin{tikzpicture}[cob]
		\draw[framecob] (0,0) -- (1,0) .. controls (1,1) and (2,1) .. (2,0) -- (3,0) .. controls (3,2) and (0,2) .. (0,0);
		\draw[thick, teal] (0.5,0) node[below] {$x^{\phantom{*}}$} .. controls (0.5,1.5) and (2.5,1.5) .. (2.5,0) node[below]{$x^*$};
		\end{tikzpicture}\ 
		&:=\
		\begin{tikzpicture}[cob]
		\draw[framecob] (0,0) -- (1,2) -- (2,2) -- (3,0) -- (2,0) .. controls (2,1) and (1,1) .. (1,0) -- cycle;
		\draw[framecob] (1,2) rectangle (2,3);
		\draw[framecob] (1,3) .. controls (1,4) and (2,4) .. (2,3);
		\draw[thick, teal] (.5,0) node[below]{$x^{\phantom{*}}$} .. controls (.5,1) and (1.25,1) .. (1.25,2);
		\draw[thick, teal] (2.5,0) node[below]{$x^*$} .. controls (2.5,1) and (1.75,1) .. (1.75,2);
		\draw[thick, teal] (1.75,2) .. controls (1.75,2.5) and (1.25,2.5) .. (1.25,2);
		\end{tikzpicture}, 
		&\quad 
		\begin{tikzpicture}[cob,yscale=-1]
		\draw[framecob] (0,0) -- (1,0) .. controls (1,1) and (2,1) .. (2,0) -- (3,0) .. controls (3,2) and (0,2) .. (0,0);
		\draw[thick, teal] (0.5,0) node[above] {$x^*$} .. controls (0.5,1.5) and (2.5,1.5) .. (2.5,0) node[above]{$x^{\phantom{*}}$};
		\end{tikzpicture}\ 
		&:= \
		\begin{tikzpicture}[cob,yscale=-1]
		\draw[framecob] (0,0) -- (1,2) -- (2,2) -- (3,0) -- (2,0) .. controls (2,1) and (1,1) ..  (1,0) -- cycle;
		\draw[framecob] (1,2) rectangle (2,3);
		\draw[framecob] (1,3) .. controls (1,4) and (2,4) .. (2,3);
		\draw[thick, teal] (.5,0)node[above]{$x^*$} .. controls (.5,1) and (1.25,1) .. (1.25,2) ;
		\draw[thick, teal] (2.5,0)node[above]{$x$} .. controls (2.5,1) and (1.75,1) .. (1.75,2);
		\draw[thick, teal] (1.75,2) .. controls (1.75,2.5) and (1.25,2.5) .. (1.25,2);
		\end{tikzpicture}
		\end{aligned}
		\end{equation}
		satisfy the zigzag equations 
		\begin{equation}
		\begin{aligned}
		\begin{tikzpicture}[cob]
		\draw[framecob] (0,0) -- (1,0) .. controls (1,1) and (2,1) .. (2,0) .. controls (2,-2) and (5,-2) .. (5,0) -- (4,0) .. controls (4,-1) and (3,-1).. (3,0) .. controls (3,2) and (0,2) .. (0,0);
		\draw[thick, teal] (0.5,0) node[below] {$x^{\phantom{*}}$} .. controls (0.5,1.5) and (2.5,1.5) .. (2.5,0) .. controls (2.5,-1.5) and (4.5,-1.5) .. (4.5,0) node[above]{$x^{\phantom{*}}$} ;
		\end{tikzpicture} \ &= \ \begin{tikzpicture}[cob]
		\draw[framecob] (0,0) rectangle (1,2);
		\draw[thick, teal] (0.5,0) node[below]{$x^{\phantom{*}}$} -- (.5,2) node[above]{$x^{\phantom{*}}$};
		\end{tikzpicture}\ , &\qquad 
		 \begin{tikzpicture}[cob]
			\draw[framecob] (0,0) rectangle (1,2);
			\draw[thick, teal] (0.5,0) node[below]{$x^{*}$} -- (.5,2) node[above]{$x^{*}$};
		\end{tikzpicture}
		 \ &= \
		 \begin{tikzpicture}[cob,xscale=-1]
		 \draw[framecob] (0,0) -- (1,0) .. controls (1,1) and (2,1) .. (2,0) .. controls (2,-2) and (5,-2) .. (5,0) -- (4,0) .. controls (4,-1) and (3,-1).. (3,0) .. controls (3,2) and (0,2) .. (0,0);
		 \draw[thick, teal] (0.5,0) node[below] {$x^{*}$} .. controls (0.5,1.5) and (2.5,1.5) .. (2.5,0) .. controls (2.5,-1.5) and (4.5,-1.5) .. (4.5,0) node[above]{$x^{*}$} ;
		 \end{tikzpicture}\ .
		\end{aligned}
		\end{equation}
	\end{proof}
	
	\begin{corollary}{\cite[Theorem~2]{DP08}}
		A Frobenius monoidal functor $\mathcal{F}: \mathsf{C}\rightarrow \mathsf{D}$ preserves dual pairs.
	\end{corollary}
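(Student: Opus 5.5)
The plan is to deduce the corollary from \Cref{embeddingpreserving} together with the factorization of \Cref{factorize}. Let $\mathcal{F}: \mathsf{C} \rightarrow \mathsf{D}$ be Frobenius monoidal, and let $(x, x^{\ast}, \eta, \varepsilon)$ be a dual pair in $\mathsf{C}$. By \Cref{Frobeniusclassifier} there is a strict monoidal functor $\overline{\mathcal{F}}: \frob(\mathsf{C}) \rightarrow \mathsf{D}$, and by \Cref{factorize} we have $\mathcal{F} = \overline{\mathcal{F}} \circ \mathcal{E}$. In particular $\mathcal{F}(x) = \overline{\mathcal{F}}(\underline{x})$ and $\mathcal{F}(x^{\ast}) = \overline{\mathcal{F}}(\underline{x^{\ast}})$.

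\textbf{Step 1: the dual pair in $\frob(\mathsf{C})$.} By \Cref{embeddingpreserving}, $\underline{x^{\ast}}$ is a right dual of $\underline{x}$ in $\frob(\mathsf{C})$. The explicit unit and counit are those of \eqref{eq:frob_diag_rigid}. Reading the diagrams as composites of generators, they are
\[
\mathscr{k}_{x^{\ast},x}\circ \underline{\eta}\circ \mathscr{j}
\quad\text{and}\quad
\mathscr{q}\circ \underline{\varepsilon}\circ \ell_{x,x^{\ast}}.
\]

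\textbf{Step 2: transport along a strict functor.} A strict monoidal functor sends a dual pair to a dual pair. It preserves $\otimes$, units and composition on the nose, so it carries the zigzag identities to the zigzag identities. Applying $\overline{\mathcal{F}}$ therefore shows that $\mathcal{F}(x^{\ast})$ is a right dual of $\mathcal{F}(x)$ in $\mathsf{D}$. Unwinding the correspondence of \Cref{Frobeniusclassifier}, the new unit and counit are
\[
\mathscr{c}_{x^{\ast},x}\circ \mathcal{F}(\eta)\circ \mathscr{u}
\quad\text{and}\quad
\mathscr{e}\circ \mathcal{F}(\varepsilon)\circ \mathscr{m}_{x,x^{\ast}}.
\]
These are exactly the structure maps of \cite{DP08}. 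The argument for left duals is symmetric.

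\textbf{Main obstacle.} The only real content lies in \Cref{embeddingpreserving}, which is assumed here. What remains is a bookkeeping check. The diagrammatic units must really be the composites of generators listed in Step 1. In particular, $\underline{\eta}$ has source $\underline{\bbone}$, which is why the generator $\mathscr{j}$ has to be precomposed. Everything else is formal functoriality.
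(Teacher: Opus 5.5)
Your proposal is correct and follows the paper's own proof. You factor $\mathcal{F} = \overline{\mathcal{F}}\circ\mathcal{E}$ via \Cref{factorize}, invoke \Cref{embeddingpreserving} to see that $\underline{x^{\ast}}$ is a dual of $\underline{x}$ in the Frobenius classifier, and use that the strict monoidal functor $\overline{\mathcal{F}}$ preserves dual pairs. Your explicit reading of \eqref{eq:frob_diag_rigid} as $\mathscr{k}_{x^{\ast},x}\circ\underline{\eta}\circ\mathscr{j}$ and $\mathscr{q}\circ\underline{\varepsilon}\circ\ell_{x,x^{\ast}}$, with images $\mathscr{c}_{x^{\ast},x}\circ\mathcal{F}(\eta)\circ\mathscr{u}$ and $\mathscr{e}\circ\mathcal{F}(\varepsilon)\circ\mathscr{m}_{x,x^{\ast}}$ in $\mathsf{D}$, is accurate and makes explicit what the paper leaves implicit.
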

	
	\begin{proof}
		Using the functor $\mathcal{E}: \mathsf{C} \rightarrow \frob(\mathsf{C})$ of \Cref{factorize}, we write $\mathcal{F} = \overline{\mathcal{F}} \circ \mathcal{E}$. Since $\overline{\mathcal{F}}$ is strict monoidal, it preserves dual pairs, and $\mathcal{E}$ preserves dual pairs by \Cref{embeddingpreserving}.
	\end{proof}

	As an example of application, we give the diagrammatic proof that, for Frobenius monoidal functors, a lax and oplax monoidal transformation between them is invertible. The diagrammatic proof was given in a talk by McCurdy~\cite{McCurdyTalk10}; see also~\cite[Prop.~7]{DP08}, \cite[Prop.~2.10]{PS13} for the statement.
	\begin{proposition}\label{prop:invertibleFrob}
		Given Frobenius monoidal functors $\funG, \funK:\catC \to \catD$ for $\catC$ rigid, if $\tau:\mathcal G\to \mathcal K$ is a lax and oplax monoidal transformation, then $\tau$ is invertible.
	\end{proposition}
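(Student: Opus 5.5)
The plan is to construct an explicit two-sided inverse of $\tau$ as a \emph{mate} of $\tau$ at the dual object, and then to reduce the inverse identities to the zigzag equations of \Cref{embeddingpreserving}. The whole argument is the diagrammatic one: components of $\tau$ are drawn as beads on strands passing through the envelope, and they slide through the pants, copants, caps and cups of $\frob(\mathsf C)$.

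First we fix duals. Since $\catC$ is rigid, every $x$ has a right dual $x^{\ast}$ with $\eta\colon\bbone\to x^{\ast}\otimes x$ and $\varepsilon\colon x\otimes x^{\ast}\to\bbone$. By \Cref{embeddingpreserving} and \Cref{factorize}, applied to $\funG$, the object $\funG(x^{\ast})$ is a right dual of $\funG(x)$. The evaluation is
\[
\mathrm{ev}^{\funG}=\mathscr e^{\funG}\circ\funG(\varepsilon)\circ\mathscr m^{\funG}_{x,x^{\ast}},
\]
and the coevaluation is
\[
\mathrm{coev}^{\funG}=\mathscr c^{\funG}_{x^{\ast},x}\circ\funG(\eta)\circ\mathscr u^{\funG}.
\]
These are exactly the images of the morphisms in \eqref{eq:frob_diag_rigid} under $\overline{\funG}$. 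The same construction applies to $\funK$.

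We then define the candidate inverse $\sigma_x\colon\funK(x)\to\funG(x)$ by
\[
\sigma_x=(\mathrm{ev}^{\funK}\otimes\id_{\funG(x)})\circ(\id_{\funK(x)}\otimes\tau_{x^{\ast}}\otimes\id_{\funG(x)})\circ(\id_{\funK(x)}\otimes\mathrm{coev}^{\funG}).
\]
Diagrammatically, a strand enters through a $\funK$-cap, picks up a $\tau$-bead on the $x^{\ast}$ strand, and exits through a $\funG$-cup. If one prefers to keep the duality conventions of the lemma unchanged, the left-dual version can be used instead; it works identically.

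The key step is a pair of sliding identities, which follow from the hypotheses on $\tau$. Lax monoidality gives $\mathscr m^{\funK}(\tau\otimes\tau)=\tau\,\mathscr m^{\funG}$ and $\tau_{\bbone}\mathscr u^{\funG}=\mathscr u^{\funK}$. Oplax monoidality gives $(\tau\otimes\tau)\mathscr c^{\funG}=\mathscr c^{\funK}\tau$ and $\mathscr e^{\funK}\tau_{\bbone}=\mathscr e^{\funG}$. Naturality gives $\tau_{\bbone}\funG(\varepsilon)=\funK(\varepsilon)\tau_{x\otimes x^{\ast}}$, and similarly for $\eta$. Combining these yields
\[
\mathrm{ev}^{\funK}\circ(\tau_x\otimes\tau_{x^{\ast}})=\mathrm{ev}^{\funG}
\qquad\text{and}\qquad
(\tau_{x^{\ast}}\otimes\tau_x)\circ\mathrm{coev}^{\funG}=\mathrm{coev}^{\funK}.
\]
In words, a pair of beads slides through a cap or a cup and disappears.

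With these identities, $\sigma_x\circ\tau_x$ becomes the $\funG$-zigzag, hence equals $\id_{\funG(x)}$ by \Cref{embeddingpreserving}. Similarly, $\tau_x\circ\sigma_x$ becomes the $\funK$-zigzag, hence equals $\id_{\funK(x)}$. In the second computation one must move $\tau_x$ past the cup and then use the coevaluation identity to convert the $\funG$-cup together with its two beads into a $\funK$-cup.

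The main obstacle is bookkeeping rather than substance. One has to ensure that each bead lands on the correct strand, so that exactly the pair $\tau_x\otimes\tau_{x^{\ast}}$ (respectively $\tau_{x^{\ast}}\otimes\tau_x$) meets a cap or cup. One also has to check that the unit and counit compatibilities for $\tau$ are genuinely part of the hypothesis. These compatibilities ($\tau_{\bbone}\mathscr u^{\funG}=\mathscr u^{\funK}$ and $\mathscr e^{\funK}\tau_{\bbone}=\mathscr e^{\funG}$) are what allow the $\mathscr j$ and $\mathscr q$ discs to absorb $\tau_{\bbone}$, so I would state them explicitly as part of ``lax and oplax monoidal transformation''.
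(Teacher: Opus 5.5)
Your proposal is correct and follows the paper's proof. The paper defines $\tau_x^{-1}$ by the same mate construction: a $\funG$-cup, then $\tau$ on the dual strand, then a $\funK$-cap. This is the mirror image of your $\sigma_x$, using the dual on the other side. The paper leaves the inverse identities to the picture, and your two sliding identities and zigzag computations make them explicit.

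You are also right that the unit and counit compatibilities of $\tau$ must be part of the hypothesis. The paper builds them in only implicitly, through $\mathscr{j}$ in the proof of \Cref{prop:presentation}. Without them the zero transformation of the identity functor on finite-dimensional vector spaces would be a counterexample.
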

	\begin{proof}
		We do a diagrammatic proof to construct the inverse of $\tau$. We draw the diagrams of the functor $\mathcal G$ as we did previously, and add colours and double lines to distinguish those of $\mathcal K$
		\begin{equation}
		\widehat{\mathcal G}(\ell_{x,z}) = \begin{tikzpicture}[cob]
		\draw[framecob] (-1,2) -- (0,4) -- (1,4) -- (2,2) -- (1,2) .. controls (1,3) and (0,3) ..  (0,2)--cycle;
		\draw[thick, teal] (-.5,2) node[below] { $\underline{x}$} -- (.33,4) node[above] {$\underline{x}$};
		\draw[thick, teal] (1.5,2) node[below] {$\underline{z}$} -- (.66,4) node[above] { $\underline{z}$};
		\end{tikzpicture}
		\qquad 
		\widehat{\mathcal K}(\ell_{x,z}) = \begin{tikzpicture}[cob]
		\draw[framecob,blue,double] (-1,2) -- (0,4) -- (1,4) -- (2,2) -- (1,2) .. controls (1,3) and (0,3) ..  (0,2)--cycle;
		\draw[thick, teal] (-.5,2) node[below] { $\underline{x}$} -- (.33,4) node[above] {$\underline{x}$};
		\draw[thick, teal] (1.5,2) node[below] {$\underline{z}$} -- (.66,4) node[above] {$\underline{z}$};
		\end{tikzpicture}
		\end{equation}
		We represent the natural transformation $\tau_x$ as a dotted line, and its naturality is expressed diagrammatically by allowing crossing:
		\begin{equation}
		\tau_x = 
		\begin{tikzpicture}[cob]
		\draw[framecob] (-1,0) -- (-1,-1)--(1,-1) -- (1,0);
		\draw[framecob, blue, double] (-1,0) -- (-1,1) -- (1,1) -- (1,0);
		\draw[thick, dashed] (-1.25,0) -- (1.25,0);
		\draw[thick, teal] (0,-1) node[below]{ $\underline x$}  -- (0,1)node[above]{$\underline x$};
		\end{tikzpicture}\ ,
		\qquad f\tau_x \to\ 
		\begin{tikzpicture}[cob]
		\draw[framecob] (-1,0) -- (-1,-1)--(1,-1) -- (1,0);
		\draw[framecob, blue, double] (-1,0) -- (-1,1) -- (1,1) -- (1,0);
		\draw[thick, dashed] (-1.25,0) -- (1.25,0);
		\draw[thick, teal] (0,0.5) node{ $f$} circle(.25);
		\draw[thick, teal] (0,-1) node[below]{$\underline x$}  -- (0,.25); \draw[thick, teal] (0,1-.25) -- (0,1)node[above]{$\underline y$};
		\end{tikzpicture} \ = \ 	
		\begin{tikzpicture}[cob]
		\draw[framecob] (-1,0) -- (-1,-1)--(1,-1) -- (1,0);
		\draw[framecob, blue, double] (-1,0) -- (-1,1) -- (1,1) -- (1,0);
		\draw[thick, dashed] (-1.25,0) -- (1.25,0);
		\draw[thick, teal] (0,-1) node[below]{ $\underline x$}  -- (0,-.75); \draw[thick, teal] (0,-.25) -- (0,1)node[above]{$\underline y$};
		\draw[thick, teal] (0,-.5) node{$f$} circle(.25);
		\end{tikzpicture} \leftarrow \tau_yf, \quad \text{for } f:x\to y.
		\end{equation}
		The fact that $\tau$ is lax monoidal expresses itself diagrammatically as
		\begin{equation}
		\begin{tikzpicture}[cob]
		\draw[framecob,blue,double] (-1.15,1-.3)-- (-.5,2) --(-.5,3) -- (.5,3) -- (.5,2) -- (1.15,1-.3); 
		\draw[framecob] (-1.15,1-.3) -- (-1.5,0) -- (-.5,0) .. controls (-.5,1) and (.5,1) .. (.5,0) -- (1.5,0) -- (1.15,1-.3);
		\draw[dashed] (-1.4,1-.3) -- (1.4,1-.3);
		\end{tikzpicture}
		\quad = \quad 	
		\begin{tikzpicture}[cob]
		\draw[framecob,blue,double] (-.5,2) --(-.5,3) -- (.5,3) -- (.5,2); 
		\draw[framecob] (-.5,2) -- (-1.5,0) -- (-.5,0) .. controls (-.5,1) and (.5,1) .. (.5,0) -- (1.5,0) -- (.5,2);
		\draw[dashed] (-1.1,2) -- (1.1,2);
		\end{tikzpicture} \ .
		\end{equation}
		We construct the inverse via the following diagrammatic construction using the rigidity of $\mathsf C$~\eqref{eq:frob_diag_rigid}
		\begin{equation}
		\tau_x^{-1} = \quad 
		\begin{tikzpicture}[cob]
		\draw[framecob] (-.5,0) .. controls (-.5,-.5) and (-1.5,-.5) .. (-1.5,0) --(-1.5,1)-- (-2.5,1) -- (-2.5,0) .. controls (-2.5,-1.5) and (.5,-1.5) .. (.5,0);
		%		\draw[very thick] (-1.5,1) -- (-1.5,0) .. controls (-1.5,-.5) and (-.5,-.5) .. (-.5,0);
		\draw[dashed] (-.75,0) -- (.75,0);
		\draw[framecob, double, blue] (-.5,0) .. controls (-.5,1.5) and (2.5,1.5) .. (2.5,0) -- (2.5,-1) -- (1.5,-1) -- (1.5,0) .. controls (1.5,.5) and (.5,.5) .. (.5,0);
		%		\draw[thick, double, blue] (.5,0) .. controls  (.5,.5) and (1.5,.5) .. (1.5,0) -- (1.5,-1);
		\draw[thick, teal] (-2,1) node[above]{$\underline x$} -- (-2,0) .. controls (-2,-1) and (0,-1) .. (0,0) .. controls (0,1) and (2,1) .. (2,0) -- (2,-1) node[below]{$\underline x$};
		\end{tikzpicture}\ .
		\end{equation}
	\end{proof}
	
	\section*{Acknowledgements}
	We thank Johannes Flake, Peter Kristel, Léo Schelstraete, Ivan Yakovlev, and Tony Zorman for discussion around this project. ALR acknowledges the hospitality of Uppsala University where this project started, and the support of FRQNT (ref.~326641) and from his Hausdorff postdoc, which is funded by the Deutsche Forschungsgemeinschaft under Germany's excellence Strategy -- GZ 2047/1, Projeckt-ID~390685813.
	MS\ is supported by the Knut and Alice Wallenberg Foundation (Grant No.~2024.0339). Large parts of this work were done while MS\ was supported by Lundström-Åmans stipendiestiftelse.
	
	%\section*{Declarations} %A: we might need to do it eventually
	%\begingroup
	%\singlespacing
	%\subsection*{Ethical Approval} Not applicable.
	%\subsection*{Competing interests} The authors declare no conflict of interest related to this work.
	%\subsection*{Authors' contribution} 
	%\subsection*{Funding}  ALR was supported by a postdoctoral scholarship of the FRQNT [id]
	%\subsection*{Availability of data and software}
	%Data sharing is not applicable to this article as no datasets
	%were generated or analysed during the study.
	%\endgroup
	\printbibliography
\end{document}